	\def\MR#1{}
\title[Formulae of some global CR invariants]{Formulae of some global CR invariants for Sasakian $\eta$-Einstein manifolds}
\author{Yuya Takeuchi}
\address{Department of Mathematics \\ Graduate School of Science \\ Osaka University
	\\ 1-1 Machikaneyama-cho, Toyonaka, Osaka 560-0043, Japan}
\curraddr{Division of Mathematics \\ Institute of Pure and Applied Sciences \\ University of Tsukuba
	\\ 1-1-1 Tennodai, Tsukuba, Ibaraki 305-8571 Japan}
\email{ytakeuchi@math.tsukuba.ac.jp, yuya.takeuchi.math@gmail.com}
\subjclass[2010]{32V05, 32V15, 53C25}
\keywords{Sasakian $\eta$-Einstein manifold, Burns-Epstein invariant, renormalized characteristic form}
\thanks{This work was supported by JSPS Research Fellowship for Young Scientists
and JSPS KAKENHI Grant Number JP19J00063 and JP21K13792.}
\begin{document}

\begin{abstract}
	In this paper,
	we give explicit formulae of the Burns-Epstein invariant
	and global CR invariants via renormalized characteristic forms introduced by Marugame
	for Sasakian $\eta$-Einstein manifolds.
	As an application,
	we show that the latter invariants are algebraically independent.
\end{abstract}

\maketitle

\section{Introduction}
\label{section:introduction}

The biholomorphic equivalence problem
is one of the most major problems in several complex variables.
An approach to this problem is to study the CR structures on the boundaries of domains.
In this direction,
Fefferman~\cite{Fefferman1974} has shown that
two bounded strictly pseudoconvex domains in $\mathbb{C}^{n + 1}$ are biholomorphic
if and only if their boundaries are isomorphic as CR manifolds.
Since then,
it has been of great importance to construct and compute CR invariants.

Recently,
some researchers have introduced and studied invariants
for strictly pseudoconvex CR manifolds admitting a pseudo-Einstein contact form:
the total $Q^{\prime}$-curvature~\cites{Case-Yang2013,Hirachi2014},
the Burns-Epstein invariant~\cites{Burns-Epstein1990-Char,Marugame2016},
the total $\calI^{\prime}$-curvatures~\cites{Case-Gover2020,Marugame2021,Case-Takeuchi2023},
and global CR invariants via renormalized characteristic forms~\cite{Marugame2021}.
Note that the first and third ones are special cases of the last one.
In general,
it is hard to compute these invariants of a given CR manifold.
However,
we have already obtained explicit formulae of the first and third ones for Sasakian $\eta$-Einstein manifolds,
which are pseudo-Hermitian manifolds satisfying a strong Einstein condition.
This has been carried out for the total $Q^{\prime}$-curvature
by Case and Gover~\cite{Case-Gover2020} and the author~\cite{Takeuchi2018} independently,
and for the total $\calI^{\prime}$-curvatures by Marugame~\cite{Marugame2021} implicitly.
The purpose of this paper is to give formulae of the remaining cases,
the Burns-Epstein invariant and global CR invariants via renormalized characteristic forms,
for Sasakian $\eta$-Einstein manifolds.

We first consider global CR invariants via renormalized characteristic forms.
Let $\Omega$ be an $(n + 1)$-dimensional strictly pseudoconvex domain with boundary $M$.
Assume that $M$ admits a pseudo-Einstein contact form.
Take a Fefferman defining function $\rho$ of $\Omega$;
see~\cref{subsection:Fefferman-defining-function} for the definition.
The $(1, 1)$-form
\begin{equation}
	\omega_{+}
	= - d d^{c} \log (- \rho)
\end{equation}
defines a K\"{a}hler metric $g_{+}$ near the boundary,
where $d^{c} = (\sqrt{- 1} / 2)(\delb - \del)$.
The Chern connection with respect to $g_{+}$ diverges on the boundary since so does $g_{+}$.
However,
we obtain Burns-Epstein's renormalized connection,
which is smooth up to the boundary,
via a c-projective compactification~\cite{Cap-Gover2019}.
The corresponding curvature form is denoted by $\Theta$.
Let $\Phi$ be a (not necessarily homogeneous) $GL(n + 1, \mathbb{C})$-invariant polynomial of degree at most $n$.
Marugame~\cite{Marugame2021} has proved that
\begin{equation}
	\scrI_{\Phi}(M)
	= - \sum_{m = 0}^{n} \lp \int_{\rho < - \epsilon}
		\frac{d \log (- \rho) \wedge d^{c} \log (- \rho)}{2 \pi}
		\wedge  \pqty{ \frac{\omega_{+}}{2 \pi} }^{n - m}
		\wedge \Phi \pqty{ \frac{\sqrt{- 1}}{2 \pi} \Theta }
\end{equation}
is independent of the choice of $\rho$,
and gives a global CR invariant of $M$.
More precisely,
he has shown that $\scrI_{\Phi}$ is written as the integral of a linear combination of the complete contractions
of polynomials in the Tanaka-Webster torsion, curvature, and their covariant derivatives.
(Our definition is a little bit different from Marugame's one.
Our modification is to guarantee $\scrI_{\Phi_{1}} + \scrI_{\Phi_{2}} = \scrI_{\Phi_{1} + \Phi_{2}}$
for any $GL(n + 1, \mathbb{C})$-invariant polynomials $\Phi_{1}$ and $\Phi_{2}$ of degree at most $n$.)

Our first result is to give an explicit expression of $\scrI_{\Phi}$
in terms of $\Phi$, the Einstein constant, and the Chern tensor $\tensor{S}{_{\alpha}^{\beta}_{\rho}_{\ovxs}}$
for Sasakian $\eta$-Einstein manifolds.
A \emph{Sasakian $\eta$-Einstein manifold}
is a pseudo-Hermitian manifold $(S, T^{1, 0} S, \eta)$ of dimension $2 n + 1$
such that the Tanaka-Webster torsion $\tensor{A}{_{\alpha}_{\beta}}$
and Ricci curvature $\tensor{\Ric}{_{\alpha}_{\ovxb}}$ satisfy
\begin{equation}
	\tensor{A}{_{\alpha}_{\beta}}
	= 0,
	\qquad
	\tensor{\Ric}{_{\alpha}_{\ovxb}}
	= (n + 1) \lambda \tensor{l}{_{\alpha}_{\ovxb}},
\end{equation}
where $\lambda \in \mathbb{R}$ and $\tensor{l}{_{\alpha}_{\ovxb}}$ is the Levi form.
We call the constant $(n + 1) \lambda$ the Einstein constant of $(S, T^{1, 0} S, \eta)$.

\begin{theorem}
\label{thm:Marugame's-CR-invariant-for-Sasakian-eta-Einstein-manifolds}
	Let $(S, T^{1, 0} S, \eta)$ be a closed $(2 n + 1)$-dimensional Sasakian $\eta$-Einstein manifold
	with Einstein constant $(n + 1) \lambda$.
	For a $GL(n + 1, \mathbb{C})$-invariant polynomial $\Phi$ of degree at most $n$,
	define a $GL(n, \mathbb{C})$-invariant polynomial $\Phi^{\prime}$ by
	\begin{equation}
		\Phi^{\prime}(A)
		= \Phi
		\begin{pmatrix}
			A & 0 \\
			0 & 0
		\end{pmatrix}
		.
	\end{equation}
	Then
	\begin{equation}
		\scrI_{\Phi}(S)
		= \sum_{m = 0}^{n} \int_{S} \pqty{ - \frac{\lambda}{2 \pi} \eta }
			\wedge \pqty{ - \frac{\lambda}{2 \pi} d \eta }^{n - m}
			\wedge \Phi^{\prime} \pqty{ \frac{\sqrt{- 1}}{2 \pi} \Xi },
	\end{equation}
	where
	\begin{equation}
		\tensor{\Xi}{_{\alpha}^{\beta}}
		= \tensor{S}{_{\alpha}^{\beta}_{\rho}_{\ovxs}} \tensor{\theta}{^{\rho}} \wedge \tensor{\theta}{^{\ovxs}}.
	\end{equation}
\end{theorem}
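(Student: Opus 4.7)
The plan is to realize $S$ as the boundary of a natural filling $\Omega$ arising from the K\"ahler cone $C(S) = S \times \bbR_{> 0}$, produce an explicit Fefferman defining function $\rho$ adapted to the Sasakian $\eta$-Einstein structure, and then compute Marugame's integral in closed form. This parallels the strategy used for the total $Q^{\prime}$-curvature (Case-Gover, Takeuchi) and for the total $\calI^{\prime}$-curvatures (Marugame).

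First I would equip $C(S)$ with its natural complex structure induced by the Sasakian data, so that the vector field $Z_{0} := r \bdry_{r} - \sqrt{-1}\, \xi$ together with a horizontal lift of any local $(1,0)$-frame $\{Z_{\alpha}\}$ on $S$ trivializes $T^{1,0} C(S)$. Since $d^{c}(r^{2})$ is a multiple of $r^{2} \eta$, any defining function of the form $\rho = \rho(r)$ has simple closed-form expressions for $d\rho$ and $d^{c}\rho$, and the complex Monge-Amp\`ere equation characterizing a Fefferman defining function reduces to an ODE in $r$. Using the $\eta$-Einstein condition (and in particular the vanishing of the Tanaka-Webster torsion) this ODE can be solved in closed form, so I fix $\rho$ accordingly and take $\Omega = \{ \rho < 0 \}$ as a strictly pseudoconvex filling of $S$ with an \emph{exact} Fefferman defining function.

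Second, the K\"ahler form $\omega_{+} = - dd^{c} \log(-\rho)$ defines a complete K\"ahler-Einstein metric on $\Omega$ that splits cleanly in the adapted frame, with a radial piece proportional to $d\rho \wedge \eta$ and a transverse piece controlled by $d\eta$ and the transverse K\"ahler-Einstein data. I would then compute the Chern connection of $g_{+}$ in this frame, and apply the c-projective compactification of \v{C}ap-Gover to strip off the singular part at $\{\rho = 0\}$, obtaining the renormalized curvature matrix $\Theta$ smooth up to the boundary. The key structural claim I would establish is that $\Theta|_{\{\rho = 0\}}$ is block-diagonal in the splitting $\bbC \cdot Z_{0} \oplus \Span\{Z_{\alpha}\}$: the radial $(0,0)$-block equals a multiple of $\lambda\, d\eta$, and the transverse $n \times n$ block equals $\Xi + \lambda \, d\eta \cdot \Id_{n}$, reflecting the decomposition of the transverse K\"ahler-Einstein Chern curvature into its trace-free Chern tensor part and its Einstein-Ricci trace part. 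Off-diagonal entries carry a factor of $d\rho$ and therefore drop out of the boundary integrand.

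Finally I would substitute into Marugame's formula: the factor $d\log(-\rho) \wedge d^{c}\log(-\rho)$ localizes the ambient integral over $\Omega$ to an integral over $S$ against $\eta$, while the block-diagonal expansion of $\Phi(\Theta)$ produces $\Phi^{\prime}$ on the transverse block, and the powers of $\omega_{+}$ combine with $\lambda \, d\eta$ contributions from the radial block through a binomial expansion to yield the sum $\sum_{m = 0}^{n}$ together with the factor $(- \lambda / 2\pi)^{n - m + 1}\, \eta \wedge (d\eta)^{n - m}$ in the statement. The main obstacle is the third step: the explicit identification of $\Theta$ at the boundary, i.e.\ verifying the block-diagonal structure, pinning down the radial block as $\lambda\, d\eta$, and recognizing the transverse block as $\Xi$ plus the Einstein contribution $\lambda \, d\eta \cdot \Id_{n}$. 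Once this identification is in hand, the remainder is a Chern-Weil and binomial bookkeeping argument that collapses Marugame's $(n+1)$-dimensional ambient integral into the claimed $(2n+1)$-dimensional intrinsic integral over $S$.
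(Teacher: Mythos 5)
Your global strategy --- an explicit Fefferman defining function on the cone $C(S)$ (the paper takes $\rho=\lambda^{-1}(r^{2\lambda}-1)$, resp.\ $\log r^{2}$ for $\lambda=0$, from Takeuchi's ambient construction), an explicit computation of the renormalized connection and curvature, and substitution into Marugame's formula --- is the same as the paper's. The gap is in the step you yourself single out as the main obstacle: the identification of $\Theta$ near the boundary. Carrying out the computation (the paper does it by writing the renormalized connection in terms of the Graham--Lee connection) one finds that \emph{all} components of $\Theta$ involving the radial direction vanish identically, $\tensor{\Theta}{_{\infty}^{\beta}}=\tensor{\Theta}{_{\alpha}^{\infty}}=\tensor{\Theta}{_{\infty}^{\infty}}=0$ (for instance $\tensor{\Theta}{_{\infty}^{\infty}}=-\lambda\del\delb\log r^{2}+\lambda\tensor{l}{_{\beta}_{\ovxg}}\tensor{\theta}{^{\beta}}\wedge\tensor{\theta}{^{\ovxg}}=0$), and the transverse block equals exactly $\tensor{\Xi}{_{\alpha}^{\beta}}$: the terms $\sqrt{-1}\lambda\,d\eta\,\tensor{\delta}{_{\alpha}^{\beta}}$ and $-\lambda\tensor{l}{_{\alpha}_{\ovxg}}\tensor{\theta}{^{\beta}}\wedge\tensor{\theta}{^{\ovxg}}$ produced by the renormalization cancel precisely the Einstein-trace part of the Tanaka--Webster curvature, so no $\lambda\,d\eta\cdot\Id$ survives inside the curvature. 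Your claimed blocks (radial block a multiple of $\lambda\,d\eta$, transverse block $\Xi+\lambda\,d\eta\cdot\Id_{n}$) are therefore incorrect, and this is not a harmless discrepancy: the exact vanishing of the radial block is what gives $\Phi\pqty{\tfrac{\sqrt{-1}}{2\pi}\Theta}=\Phi'\pqty{\tfrac{\sqrt{-1}}{2\pi}\Xi}$ on the nose, and it is the very reason the bordered-by-zero polynomial $\Phi'$ appears in the statement.

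Relatedly, your final bookkeeping mis-locates the source of the factors $\pqty{-\lambda/2\pi}^{n-m+1}\eta\wedge(d\eta)^{n-m}$. The sum over $m$ is already part of the definition of $\scrI_{\Phi}$; the $(d\eta)^{n-m}$ comes from $\omega_{+}^{\,n-m}$ after wedging with $d\log(-\rho)\wedge d^{c}\log(-\rho)\propto d\rho\wedge\eta$; and every power of $\lambda$ comes from the defining function, namely from the logarithmic part of the radial integral $\int^{-\epsilon}\frac{(1+\lambda\rho)^{n-m+1}}{(-\rho)^{n-m+2}}\,d\rho$ --- a binomial expansion of $(1+\lambda\rho)^{n-m+1}$, not of $\Phi(\Theta)$. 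With your curvature identification the integrand would instead involve $\Phi$ evaluated on a matrix with a nonzero radial entry and with $\lambda\,d\eta\cdot\Id$ mixed into the transverse block, and no rearrangement of that expression recovers the stated formula. To repair the proof, redo the curvature computation --- e.g.\ via the Graham--Lee connection, using $\tensor{\wtl}{_{\alpha}_{\ovxb}}=(1+\lambda\rho)\tensor{l}{_{\alpha}_{\ovxb}}$, $\kappa=\lambda(1+\lambda\rho)^{-1}$, and the vanishing of the torsion --- and establish the exact identities above; once they are in hand, the Fubini-plus-logarithmic-part argument you sketch does finish the proof.
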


A typical example of Sasakian $\eta$-Einstein manifolds
is a circle bundle over a K\"{a}hler-Einstein manifold.
Let $Y$ be an $n$-dimensional complex manifold
and $(L, h)$ a Hermitian holomorphic line bundle over $Y$
such that
\begin{equation}
	\omega = - \sqrt{-1} \Theta_{h} = d d^{c} \log h
\end{equation}
defines a K\"{a}hler-Einstein metric on $Y$ with Einstein constant $(n + 1) \lambda$.
Consider the circle bundle
\begin{equation}
	S = \Set{ v \in L | h(v, v) = 1}
\end{equation}
over $Y$,
which is a real hypersurface in the total space of $L$.
The triple
\begin{equation}
	(S, T^{1, 0} S = T^{1, 0} L |_{S} \cap (T S \otimes \mathbb{C}), \eta = d^{c} \log h |_{S})
\end{equation}
is a $(2 n + 1)$-dimensional Sasakian $\eta$-Einstein manifold with Einstein constant $(n + 1) \lambda$
and called the \emph{circle bundle associated with $(Y, L, h)$};
see~\cref{subsection:Sasakian-manifolds} for details.
In this case,
we can calculate $\scrI_{\Phi}$ in terms of the Einstein constant and characteristic numbers of $Y$.
In order to simplify computation,
we introduce some invariant polynomials.
Let $m$ be a positive integer.
Define a $GL(m, \mathbb{C})$-invariant homogeneous polynomial $\ch_{k}$ of degree $k$ by
\begin{equation}
	\ch_{k}(A)
	= \frac{1}{k !} \tr A^{k}.
\end{equation}
A \emph{partition} of $n$
is an $n$-tuple $\varsigma = (\varsigma_{1}, \dots, \varsigma_{n}) \in \mathbb{N}^{n}$ with $\sum_{k = 1}^{n} k \varsigma_{k} = n$.
The space of partitions of $n$ is written as $\Part(n)$.
For a partition $\varsigma$ of $n$,
set
\begin{equation}
\label{eq:generator-of-Marugame's-invariant}
	\Phi_{\varsigma}(A) = \prod_{k = 2}^{n} \ch_{k}(A)^{\varsigma_{k}},
\end{equation}
which is a $GL(m, \mathbb{C})$-invariant homogeneous polynomial of degree $n - \varsigma_{1}$.
We will write $\scrI_{\Phi_{\varsigma}}$ as $\scrI_{\varsigma}$ for simplicity.
It can be seen that any $\scrI_{\Phi}$ is written as a linear combination of $(\scrI_{\varsigma})_{\varsigma \in \Part(n)}$
(\cref{lem:linear-combination-of-Marugame's-invariant}).
Hence it suffices to compute $\scrI_{\varsigma}$.

\begin{theorem}
\label{thm:Marugame's-CR-invariants-of-Chern-character-type}
	Let $(L, h)$ be a Hermitian holomorphic line bundle over a closed $n$-dimensional complex manifold $Y$
	such that $\omega = - \sqrt{- 1} \Theta_{h}$ defines a K\"{a}hler-Einstein metric on $Y$
	with Einstein constant $(n + 1) \lambda$.
	Denote by $(S, T^{1, 0} S, \eta)$
	the circle bundle associated with $(Y, L, h)$.
	For a partition $\varsigma$ of $n$,
	\begin{equation}
		\scrI_{\varsigma}(S)
		= - \lambda \int_{Y} (\lambda c_{1}(L))^{\varsigma_{1}}
			\prod_{k = 2}^{n} \bqty{\sum_{j = 0}^{k} \frac{1}{(k - j) !} (\lambda c_{1}(L))^{k - j}
				\ch_{j}(T^{1, 0} Y \oplus \mathbb{C})}^{\varsigma_{k}}.
	\end{equation}
\end{theorem}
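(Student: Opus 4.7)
My approach is to apply \cref{thm:Marugame's-CR-invariant-for-Sasakian-eta-Einstein-manifolds} to $S$ and then to transfer the integral to $Y$ via the circle bundle projection $\pi\colon S\to Y$. First I apply that theorem with $\Phi=\Phi_{\varsigma}$. Because $\Phi_{\varsigma}$ is homogeneous of degree $n-\varsigma_{1}$, matching the total form degree to $\dim S=2n+1$ forces $m=n-\varsigma_{1}$, so that only one summand in the formula of \cref{thm:Marugame's-CR-invariant-for-Sasakian-eta-Einstein-manifolds} survives and
\begin{equation}
	\scrI_{\varsigma}(S) = \int_{S} \pqty{-\frac{\lambda}{2\pi}\eta}\wedge\pqty{-\frac{\lambda}{2\pi}d\eta}^{\varsigma_{1}}\wedge \Phi_{\varsigma}^{\prime}\pqty{\frac{\sqrt{-1}}{2\pi}\Xi}.
\end{equation}
Each $\ch_{k}$ is unchanged by the block-diagonal extension, so $\Phi_{\varsigma}^{\prime}(A)=\prod_{k=2}^{n}\ch_{k}(A)^{\varsigma_{k}}$ as a polynomial on $n\times n$ matrices.

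Next I will exploit the circle bundle structure. A direct calculation gives $d\eta=\pi^{*}\omega$, and the convention $\omega=-\sqrt{-1}\Theta_{h}$ yields $-\omega/(2\pi)=c_{1}(L)$ as a de~Rham form, so $-\frac{\lambda}{2\pi}d\eta=\pi^{*}(\lambda c_{1}(L))$. The horizontal distribution $T^{1,0}S$ is canonically identified with $\pi^{*}T^{1,0}Y$, and the Tanaka-Webster connection on it agrees with the pullback of the Chern connection of $(Y,\omega)$, so the horizontal part of the Tanaka-Webster curvature on $S$ equals the pullback of the K\"ahler curvature on $Y$. Plugging the Einstein condition into the standard trace decomposition of the Tanaka-Webster curvature (via the Schouten tensor $\tensor{P}{_{\alpha}_{\ovxb}}=\frac{\lambda}{2}\tensor{l}{_{\alpha}_{\ovxb}}$) then produces an explicit expression
\begin{equation}
	\tensor{\Xi}{_{\alpha}^{\beta}} = \pi^{*}\tensor{R^{Y}}{_{\alpha}^{\beta}} - \lambda \tensor{l}{_{\alpha}_{\ovxs}}\theta^{\beta}\wedge\theta^{\ovxs} - \lambda\delta_{\alpha}^{\beta}\tensor{l}{_{\rho}_{\ovxs}}\theta^{\rho}\wedge\theta^{\ovxs},
\end{equation}
every term of which is basic, so $\Xi=\pi^{*}\widetilde{\Xi}$ for an explicit matrix-valued $(1,1)$-form $\widetilde{\Xi}$ on $Y$. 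Fibre integration along $\pi$ (with $\int_{\mathrm{fiber}}\eta=2\pi$) then converts the integral into
\begin{equation}
	\scrI_{\varsigma}(S) = -\lambda\int_{Y} (\lambda c_{1}(L))^{\varsigma_{1}}\wedge \Phi_{\varsigma}^{\prime}\pqty{\frac{\sqrt{-1}}{2\pi}\widetilde{\Xi}}.
\end{equation}

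The final and most delicate step is to verify, for every $k\geq 2$, the Chern character identity
\begin{equation}
	\ch_{k}\pqty{\frac{\sqrt{-1}}{2\pi}\widetilde{\Xi}} = \sum_{j=0}^{k} \frac{(\lambda c_{1}(L))^{k-j}}{(k-j)!}\ch_{j}(T^{1,0}Y\oplus\bbC).
\end{equation}
I plan to prove this by working formally with Chern roots via the splitting principle. The shear term $\tensor{l}{_{\alpha}_{\ovxs}}\theta^{\beta}\wedge\theta^{\ovxs}$ has an outer-product structure and squares to a scalar multiple of itself; combined with the scalar correction $-\lambda\delta_{\alpha}^{\beta}\tensor{l}{_{\rho}_{\ovxs}}\theta^{\rho}\wedge\theta^{\ovxs}$, it has the effect of shifting every Chern root of $T^{1,0}Y$ by $\lambda c_{1}(L)$ and adjoining one additional root also equal to $\lambda c_{1}(L)$. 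The resulting family of roots is precisely that of $(T^{1,0}Y\oplus\bbC)\otimes M$, where $M$ is a formal line bundle with $c_{1}(M)=\lambda c_{1}(L)$, whose degree-$k$ Chern character matches the right-hand side through the standard identity $\ch_{k}(E\otimes M)=\sum_{j=0}^{k}(c_{1}(M))^{k-j}\ch_{j}(E)/(k-j)!$. Substituting this identity into the preceding display and distributing the product over $k$ yields the claimed formula. The principal technical difficulty will be to make this Chern-root argument precise despite $\widetilde{\Xi}$ not being the curvature of an honest connection; a safe fallback is a direct induction on $k$, expanding $\tr(\widetilde{\Xi}^{k})$ using the rank-one relation for the shear term to keep the bookkeeping manageable.
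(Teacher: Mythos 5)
Your proposal is correct and follows essentially the same route as the paper: apply \cref{thm:Marugame's-CR-invariant-for-Sasakian-eta-Einstein-manifolds} (only the term $m = n - \varsigma_{1}$ survives by degree count), identify $d\eta = p^{\ast}\omega$ and $\Xi = p^{\ast}B$ with $B$ the Bochner-type form on $Y$, integrate along the $S^{1}$-fibers, and reduce everything to the Chern-character identity for $B$. Be aware, though, that the paper establishes that identity (\cref{lem:trace-of-Bochner-curvature}) exactly by the direct computation you relegate to a fallback, using $\tr K = -\sqrt{-1}(n+1)\lambda\omega$, $\Pi \wedge K = K \wedge \Pi = -\sqrt{-1}\lambda\omega\wedge\Pi$ and $K\wedge K = -\sqrt{-1}\lambda\omega\wedge K$ for $K = \Pi - B$; the identification with $\ch_{k}((T^{1,0}Y\oplus\mathbb{C})\otimes L^{\lambda})$ appears in the paper only as a formal remark (since $B$ is not the curvature of an honest connection and $L^{\lambda}$ need not exist as a bundle), so the rank-one trace computation, not the splitting principle, is what makes your final step rigorous.
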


As an application of the above theorem,
we will show that $(\scrI_{\varsigma})_{\varsigma \in \Part(n)}$ are essentially different invariants.

\begin{theorem}
\label{thm:algebraically-independence}
	The invariants $(\scrI_{\varsigma})_{\varsigma \in \Part(n)}$ are algebraically independent over $\mathbb{C}$
	--- that is,
	there exist no non-trivial polynomial relations between $(\scrI_{\varsigma})_{\varsigma \in \Part(n)}$
	with coefficients in $\mathbb{C}$.
\end{theorem}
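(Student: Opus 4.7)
The plan is to exploit the explicit formula of \cref{thm:Marugame's-CR-invariants-of-Chern-character-type} to reduce any alleged polynomial relation among $(\scrI_\varsigma)_{\varsigma \in \Part(n)}$ to a relation among Chern numbers of K\"ahler-Einstein manifolds, and then to rule this out by exhibiting sufficiently many such manifolds. Throughout I restrict attention to circle bundles $S$ over triples $(Y, L, h)$ as in \cref{thm:Marugame's-CR-invariants-of-Chern-character-type}.

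First I would carry out a scaling reduction. Replacing $(L, h)$ with $(L^{\otimes k}, h^{k})$ for $k \in \bbN$ produces another admissible triple on $Y$ whose Einstein constant becomes $(n + 1) \lambda / k$, and \cref{thm:Marugame's-CR-invariants-of-Chern-character-type} implies that the new circle bundle $S_{k}$ satisfies $\scrI_\varsigma(S_{k}) = \scrI_\varsigma(S)/k$. Hence, if a nonzero polynomial $P$ were to vanish on $(\scrI_\varsigma(S))_\varsigma$ for every such $S$, substituting $S_{k}$ and equating coefficients of each power of $1/k$ in $P(\scrI_\varsigma(S)/k)$ would force every homogeneous component of $P$ to vanish separately, so I may assume $P$ is homogeneous of some degree $d$.

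Next, using \cref{thm:Marugame's-CR-invariants-of-Chern-character-type} together with the Einstein relation $c_{1}(Y) = (n + 1) \lambda c_{1}(L)$, write
\begin{equation*}
	\scrI_\varsigma(S) = -\lambda \cdot J_\varsigma(Y), \quad J_\varsigma(Y) := \int_{Y} \mu^{\varsigma_{1}} \prod_{k = 2}^{n} \Psi_{k}^{\varsigma_{k}},
\end{equation*}
where $\mu = c_{1}(Y)/(n + 1)$ and $\Psi_{k} = \sum_{j = 0}^{k} \mu^{k - j} \ch_{j}(T^{1, 0} Y \oplus \bbC)/(k - j)!$ depend only on $Y$. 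Homogeneity of $P$ then yields $P(\scrI_\varsigma(S)) = (-\lambda)^{d} P((J_\varsigma(Y))_{\varsigma})$, so $P((J_\varsigma(Y))_{\varsigma}) = 0$ for every closed K\"ahler-Einstein $n$-manifold $Y$. Since $\Psi_{k} = \ch_{k}(T^{1, 0} Y \oplus \bbC) + (\text{terms involving } \mu \text{ and } \ch_{j} \text{ with } j < k)$, the change from the natural basis $\{\int_{Y} \mu^{\varsigma_{1}} \prod_{k \geq 2} \ch_{k}^{\varsigma_{k}}\}_\varsigma$ to $(J_\varsigma(Y))_\varsigma$ is given by a unitriangular, hence invertible, $\bbC$-linear transformation in the natural partial order on partitions. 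Passing further to the standard Chern numbers $(c_\pi[Y])_{\pi \in \Part(n)}$ via Newton's identities -- again invertible -- reduces the problem to showing that $(c_\pi)_{\pi \in \Part(n)}$ are algebraically independent as functions on the class of closed K\"ahler-Einstein $n$-manifolds.

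For the last step I would invoke a sufficiently rich family of K\"ahler-Einstein examples. Since Aubin-Yau furnishes a K\"ahler-Einstein metric on every smooth projective variety with ample canonical bundle, a convenient family is that of smooth complete intersections $Y_{\bmd} \subset \cps^{n + r}$ of multidegree $\bmd = (d_{1}, \ldots, d_{r})$ with $\sum_{i} d_{i} > n + r + 1$. Their Chern numbers $c_\pi[Y_{\bmd}]$ are explicit polynomials in $\bmd$; for $r \geq p(n)$, a Jacobian-rank computation at a specific $\bmd^{\ast}$ would verify that the $p(n)$ polynomials $(c_\pi[Y_{\bmd}])_{\pi \in \Part(n)}$ are algebraically independent in $\bbC[d_{1}, \ldots, d_{r}]$, so the image of the Chern-number map would be Zariski-dense in $\bbC^{p(n)}$, contradicting $P \neq 0$. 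The hard part of the argument is this final step: it requires an explicit Jacobian computation (or an equivalent combinatorial identity) rather than an abstract density principle, since linear independence of finitely many standard examples (e.g.\ $\prod_{i} \cps^{n_i}$ indexed by $\Part(n)$) is insufficient to conclude algebraic independence.
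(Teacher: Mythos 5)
Your overall strategy---specialize to circle bundles over K\"{a}hler-Einstein complete intersections, apply \cref{thm:Marugame's-CR-invariants-of-Chern-character-type}, and transport any alleged relation through unitriangular (hence invertible) changes of variables---is the same as the paper's, and your scaling trick with $(L^{\otimes k}, h^{k})$ is a legitimate substitute for the paper's extraction of top-degree terms (modulo the harmless sign slip $c_{1}(Y) = -(n+1)\lambda c_{1}(L)$, since $c_{1}(L) = -[\omega/2\pi]$). The genuine gap is the final step, which you explicitly defer: you reduce the theorem to the algebraic independence of the $p(n)$ Chern numbers $(c_{\pi})_{\pi \in \Part(n)}$ on closed K\"{a}hler-Einstein $n$-manifolds and propose to verify it on complete intersections $Y_{\bmd} \subset \cps^{n+r}$ with $r \geq p(n)$ by an unperformed Jacobian computation. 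As designed, this cannot be supplied: from $c(T^{1,0}Y_{\bmd}) = (1+\tau)^{n+r+1}\prod_{i}(1+d_{i}\tau)^{-1}$ and $\int_{Y_{\bmd}}\tau^{n} = d_{1}\dotsm d_{r}$, every Chern number of $Y_{\bmd}$ equals $d_{1}\dotsm d_{r}$ times a polynomial in the power sums $\sum_{i} d_{i}^{k}$, $1 \leq k \leq n$, so the $p(n)$ functions $\bmd \mapsto c_{\pi}[Y_{\bmd}]$ generate a field of transcendence degree at most $n+1$ over $\mathbb{C}$. For $n \geq 5$ one has $p(n) > n+1$, so these functions are algebraically dependent for \emph{every} $r$: no Jacobian can have rank $p(n)$, and the image of the Chern-number map is never Zariski-dense. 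Thus the step you flag as ``the hard part'' is not merely hard along your route---it is unattainable, and a richer family (or a different finishing argument) is required.

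For comparison, the paper finishes differently: it uses dimension-$n$ complete intersections $Y_{d} \subset \cps^{2n}$ with $d = (d_{1},\dots,d_{n})$, shows $\scrI_{\varsigma}(S_{d})$ is a polynomial in $d$ whose top-degree homogeneous part is $\frac{d_{1}\dotsm d_{n}s_{1}(d)}{n+1}\,p_{\varsigma}(d)$, passes to the leading homogeneous part $f_{m}$ of a putative relation, and exploits the unitriangular correspondence between $(p_{\varsigma})_{\varsigma}$ and the power-sum monomials $s_{1}^{\varsigma_{1}}\dotsm s_{n}^{\varsigma_{n}}$ together with the algebraic independence of $s_{1},\dots,s_{n}$. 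Your closing observation that a naive density or parameter count does not suffice is well taken: once $p(n) > n$ the monomials $s_{1}^{\varsigma_{1}}\dotsm s_{n}^{\varsigma_{n}}$ themselves satisfy relations (for instance $(s_{1}^{2}s_{2})^{2} = s_{1}^{4}\cdot s_{2}^{2}$ when $n = 4$), so this last step is precisely where all the delicacy of the problem is concentrated; your proposal neither carries it out nor selects a family on which it could be carried out, which is why it falls short of a proof.
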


As a corollary,
we have a criterion for the triviality of $\scrI_{\Phi}$.
This is a generalization of the latter statement of~\cite{Marugame2021}*{Proposition 5.11}.

\begin{corollary}
\label{cor:triviality-of-Marugame's-invariant}
	For a $GL(n + 1, \mathbb{C})$-invariant polynomial $\Phi$ of degree at most $n$,
	the invariant $\scrI_{\Phi}$ is trivial if and only if $\Phi \equiv 0$ modulo $\ch_{1}$.
\end{corollary}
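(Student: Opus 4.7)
The plan is to combine the basis property of $(\Phi_{\varsigma})_{\varsigma \in \Part(n)}$ modulo $\ch_{1}$ with the trace-freeness of the Chern tensor and with \cref{thm:algebraically-independence}.

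For the \emph{if} direction, suppose $\Phi = \ch_{1} \cdot \Psi$ for some $GL(n + 1, \bbC)$-invariant polynomial $\Psi$ of degree at most $n - 1$. The definition of $\Phi^{\prime}$ gives $\Phi^{\prime} = \ch_{1} \cdot \Psi^{\prime}$, so
\begin{equation*}
  \Phi^{\prime}\pqty{\frac{\sqrt{-1}}{2 \pi} \Xi}
  = \frac{\sqrt{-1}}{2 \pi} \tr(\Xi) \cdot \Psi^{\prime}\pqty{\frac{\sqrt{-1}}{2 \pi} \Xi}.
\end{equation*}
Since the Chern tensor is trace-free in its first two indices, $\tensor{S}{_{\alpha}^{\alpha}_{\rho}_{\ovxs}} = 0$, the two-form $\tr(\Xi) = \tensor{S}{_{\alpha}^{\alpha}_{\rho}_{\ovxs}} \tensor{\theta}{^{\rho}} \wedge \tensor{\theta}{^{\ovxs}}$ vanishes pointwise on any Sasakian $\eta$-Einstein manifold, and \cref{thm:Marugame's-CR-invariant-for-Sasakian-eta-Einstein-manifolds} then yields $\scrI_{\Phi}(S) = 0$.

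For the \emph{only if} direction, I would first decompose $\Phi$ uniquely as $\Phi = \ch_{1} \cdot Q + R$ with $R$ a polynomial in $\ch_{2}, \dots, \ch_{n}$ of degree at most $n$. As $\varsigma$ ranges over $\Part(n)$, the tuples $(\varsigma_{2}, \dots, \varsigma_{n})$ enumerate the monomial exponents $(e_{2}, \dots, e_{n})$ satisfying $2 e_{2} + \cdots + n e_{n} \le n$, with $\varsigma_{1}$ absorbing the slack. Consequently $(\Phi_{\varsigma})_{\varsigma \in \Part(n)}$ is a $\bbC$-basis of the space of polynomials in $\ch_{2}, \dots, \ch_{n}$ of degree at most $n$, and I may write $R = \sum_{\varsigma \in \Part(n)} c_{\varsigma} \Phi_{\varsigma}$. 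Applying the \emph{if} direction to $\ch_{1} \cdot Q$ gives $\scrI_{\Phi} = \sum_{\varsigma} c_{\varsigma} \scrI_{\varsigma}$. Triviality of $\scrI_{\Phi}$ then forces this linear combination to vanish on every closed Sasakian $\eta$-Einstein manifold, and \cref{thm:algebraically-independence} --- which in particular yields linear independence --- gives $c_{\varsigma} = 0$ for all $\varsigma$. Therefore $R = 0$, so $\Phi \equiv 0$ modulo $\ch_{1}$.

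The main obstacle is ensuring the independence input is sharp enough to rule out any nontrivial $\bbC$-linear relation among the $\scrI_{\varsigma}$ on the test class of Sasakian $\eta$-Einstein manifolds, and this is exactly what \cref{thm:algebraically-independence} delivers. Beyond that the argument is bookkeeping: the trace-free identity for the Chern tensor disposes of the $\ch_{1}$-divisible part of $\Phi$, and the explicit basis $(\Phi_{\varsigma})$ together with \cref{lem:linear-combination-of-Marugame's-invariant} identifies the relevant quotient with the span of the $\scrI_{\varsigma}$.
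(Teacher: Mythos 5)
Your ``only if'' half is essentially the paper's argument (unique decomposition $\Phi=\ch_1 Q+\sum_\varsigma c_\varsigma\Phi_\varsigma$, then \cref{thm:algebraically-independence}), and your subsidiary claims there are fine: the monomials $\Phi_\varsigma$ do span the $\ch_1$-free part, and the Chern tensor is indeed trace-free, so $\tr\Xi=\tensor{S}{_{\alpha}^{\alpha}_{\rho}_{\ovxs}}\tensor{\theta}{^{\rho}}\wedge\tensor{\theta}{^{\ovxs}}=0$ on any Sasakian $\eta$-Einstein manifold. The genuine gap is in the ``if'' direction. Triviality of $\scrI_{\Phi}$ means vanishing on \emph{every} closed strictly pseudoconvex CR manifold admitting a pseudo-Einstein contact form, whereas your argument via $\Phi'=\ch_1\cdot\Psi'$ and \cref{thm:Marugame's-CR-invariant-for-Sasakian-eta-Einstein-manifolds} only yields $\scrI_{\Phi}(S)=0$ for Sasakian $\eta$-Einstein $S$ --- a strictly weaker statement, since Sasakian $\eta$-Einstein manifolds form a very special subclass. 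What is needed (and what the paper uses) is the ``in particular'' clause of \cref{lem:linear-combination-of-Marugame's-invariant}: for a Fefferman defining function, $\ch_1(\Theta)=\tr\Theta=-dd^{c}\log\mathcal{J}_{z}[\rho]=O(\rho^{n+1})$ modulo $d\rho$ by \cref{eq:trace-of-renormalized-curvature}, so for $1\le m\le n$ the integrand is $O(1)$ and contributes no logarithmic term, while the $m=0$ term vanishes identically for degree reasons; this gives $\scrI_{\ch_1\Psi}(M)=0$ for all admissible $M$. You cite this lemma only in your closing summary, but your actual proof of the ``if'' direction does not use it and does not reach the general conclusion.

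The same gap leaks into your ``only if'' step: because your version of the ``if'' direction is only valid on Sasakian $\eta$-Einstein manifolds, the identity $\scrI_{\Phi}=\sum_\varsigma c_\varsigma\scrI_\varsigma$ is only established on that subclass, so you cannot quote \cref{thm:algebraically-independence} as a black box (its statement concerns relations that vanish identically as CR invariants). Your conclusion survives only because the proof of \cref{thm:algebraically-independence} happens to produce Sasakian $\eta$-Einstein witnesses $S_d$, which you would have to point out explicitly. Once you replace your Sasakian trace computation by \cref{lem:linear-combination-of-Marugame's-invariant} (together with the additivity $\scrI_{\Phi_1}+\scrI_{\Phi_2}=\scrI_{\Phi_1+\Phi_2}$ built into the definition), both issues disappear and your proof coincides with the paper's.
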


We will compute the Burns-Epstein invariant $\mu$ for Sasakian $\eta$-Einstein manifolds also.
Burns and Epstein~\cite{Burns-Epstein1990-Char} have introduced this invariant
for the boundaries of bounded strictly pseudoconvex domains in $\mathbb{C}^{n + 1}$.
Marugame~\cite{Marugame2016} has generalized this invariant for closed strictly pseudoconvex CR manifolds
admitting a pseudo-Einstein contact form.
He has defined this invariant as the boundary term of the renormalized Gauss-Bonnet-Chern formula
\begin{equation}
	\int_{\Omega} c_{n + 1}(\Theta)
	= \chi(\Omega) + \mu(M),
\end{equation}
where $\Omega$ is a strictly pseudoconvex domain bounded by $M$.

\begin{theorem}
\label{thm:Burns-Epstein-invariant-for-Sasakian-eta-Einstein-manifolds}
	Let $(S, T^{1, 0} S, \eta)$ be a closed $(2 n + 1)$-dimensional Sasakian $\eta$-Einstein manifold
	with Einstein constant $(n + 1) \lambda$.
	The Burns-Epstein invariant $\mu(S)$ of $S$ is given by
	\begin{equation}
		\mu(S)
		= \sum_{m = 0}^{n} \int_{S} \pqty{ - \frac{\lambda}{2 \pi} \eta }
			\wedge \pqty{ - \frac{\lambda}{2 \pi} d \eta }^{n - m}
			\wedge c_{m} \pqty{ \frac{\sqrt{- 1}}{2 \pi} \Omega },
	\end{equation}
	where $\Omega$ is the Tanaka-Webster curvature form.
\end{theorem}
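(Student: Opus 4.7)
The plan is to mirror the strategy used for Theorem 1.1, but with the top Chern polynomial $c_{n+1}$ in place of the truncated invariant polynomial $\Phi$, and to use the definition of $\mu$ as the boundary contribution to the renormalized Gauss--Bonnet--Chern integral $\int_\Omega c_{n+1}(\Theta) = \chi(\Omega) + \mu(S)$.

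First I would construct a model filling $\Omega$ of $S$ together with a Fefferman defining function $\rho$ adapted to the pseudo-Einstein contact form $\eta$. Under the Sasakian $\eta$-Einstein hypothesis such a construction is explicit near the boundary; in the circle-bundle case $\Omega$ is the natural disc bundle in $L$, and a defining function arises directly from the Hermitian metric $h$ together with the Einstein constant $(n+1)\lambda$. The K\"ahler metric $g_{+} = -dd^{c}\log(-\rho)$ and its Chern connection then admit an explicit asymptotic expansion near $S$ whose leading boundary data is governed by the Tanaka--Webster connection of $\eta$.

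Next I would write the renormalized Chern curvature $\Theta$ in a $(n+1) \times (n+1)$ block form with respect to the splitting $T^{1, 0}\Omega = T^{1, 0}S \oplus \mathbb{C}$ near $S$, where the second factor is the transverse direction picked out by $\rho$. The Sasakian $\eta$-Einstein conditions $A_{\alpha\beta} = 0$ and $\Ric_{\alpha\ovxb} = (n+1)\lambda\, l_{\alpha\ovxb}$ force the upper-left $n \times n$ block to agree with the Tanaka--Webster curvature $\Omega$ up to corrections vanishing on the boundary, while the off-diagonal and lower-right entries have explicit expressions purely in terms of $\lambda$, $d\rho$, and $d\eta$. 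This is the analogue of the curvature simplification that makes Theorem 1.1 tractable.

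Finally I would expand $c_{n+1}(\Theta) = \det\bigl(I + \frac{\sqrt{-1}}{2\pi}\Theta\bigr)$ using the block decomposition, integrate over $\{\rho < -\epsilon\}$, and subtract $\chi(\Omega)$ as $\epsilon \to 0$. The transverse integration with respect to $\rho$ yields the prefactors $(-\lambda/(2\pi))^{n-m+1}$, the tangential integration yields the $\eta \wedge (d\eta)^{n-m}$ factor, and the Chern--Weil algebra applied to the upper-left block produces $c_m(\frac{\sqrt{-1}}{2\pi}\Omega)$; summing over $m$ from $0$ to $n$ gives the claimed formula. The main obstacle is this block expansion and the combinatorial bookkeeping in the last step: identifying each piece of the expansion with a summand of the form $\eta \wedge (d\eta)^{n-m} \wedge c_m(\Omega)$ requires careful cancellation of cross-terms from the off-diagonal entries, which is carried out using the pseudo-Einstein identity $c_1(\frac{\sqrt{-1}}{2\pi}\Omega) = \frac{(n+1)\lambda}{2\pi} d\eta$ together with the Chern--Weil multiplicativity $c(A \oplus B) = c(A) c(B)$.
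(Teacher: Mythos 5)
Your outline hinges on extracting $\mu(S)$ from the renormalized Gauss--Bonnet--Chern identity $\int_{\Omega} c_{n + 1}(\Theta) = \chi(\Omega) + \mu(S)$ by expanding $c_{n + 1}(\Theta)$ near the boundary, integrating over $\{\rho < - \epsilon\}$, and ``subtracting $\chi(\Omega)$ as $\epsilon \to 0$''. That step cannot work as described. First, the quantity $\int_{\Omega} c_{n + 1}(\Theta) - \chi(\Omega)$ is global: it requires an actual compact filling $\Omega$, the renormalized connection extended over all of it, and its Euler characteristic, none of which is available for a general closed Sasakian $\eta$-Einstein manifold (the paper only ever works on the collar $\{r < 1\}$ in the cone $C(S)$, which is not a filling; your disc-bundle filling exists only in the circle-bundle case, while the theorem is general). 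Second, and more decisively, in this very setting the near-boundary integrand carries no information: by \cref{lem:renormalized-connection-for-Sasakian-eta-Einstein} the renormalized curvature satisfies $\tensor{\Theta}{_{\infty}^{\beta}} = \tensor{\Theta}{_{\alpha}^{\infty}} = \tensor{\Theta}{_{\infty}^{\infty}} = 0$, so the $(n + 1) \times (n + 1)$ curvature matrix has a vanishing row and column and $c_{n + 1}(\Theta) \equiv 0$ near $S$; no block expansion of $c_{n + 1}(\Theta)$ over a collar can produce the terms $\eta \wedge (d \eta)^{n - m} \wedge c_{m}$. The invariant lives entirely in the \emph{transgression} (boundary) term of the renormalized Gauss--Bonnet--Chern formula, which your sketch never constructs. (Also, your claim that the upper-left block agrees with the Tanaka--Webster curvature up to terms vanishing on the boundary is not accurate: on $S$ it equals $\tensor{\Omega}{_{\alpha}^{\beta}} + \sqrt{- 1} \lambda d \eta \cdot \tensor{\delta}{_{\alpha}^{\beta}} - \lambda \tensor{l}{_{\alpha}_{\ovxg}} \tensor{\theta}{^{\beta}} \wedge \tensor{\theta}{^{\ovxg}}$, and the extra terms are essential.)

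The paper instead takes as its starting point Marugame's explicit boundary formula \cite{Marugame2016}*{Theorem 4.6}, namely $\mu(M) = \frac{1}{n !} \pqty{\frac{\sqrt{- 1}}{2 \pi}}^{n + 1} \sum_{k} \binom{n}{k} \int_{M} (\Phi_{k}^{(0)} - \Phi_{k}^{(1)})$, a purely boundary integral in the renormalized connection and curvature, and evaluates it with \cref{lem:renormalized-connection-for-Sasakian-eta-Einstein}: $\tensor{\Theta}{_{\alpha}^{\infty}} = 0$ kills every $\Phi_{k}^{(1)}$, and the $\Phi_{k}^{(0)}$ sum up to $\tensor{\theta}{_{\infty}^{\infty}} \wedge \det\pqty{ \tensor{\Theta}{_{\alpha}^{\beta}} + \tensor{\theta}{_{\alpha}^{\infty}} \wedge \tensor{\theta}{_{\infty}^{\beta}} }$, in which $\tensor{\theta}{_{\alpha}^{\infty}} \wedge \tensor{\theta}{_{\infty}^{\beta}}$ exactly cancels the non-curvature part of $\tensor{\Theta}{_{\alpha}^{\beta}}$, leaving $\pqty{- \frac{\lambda}{2 \pi} \eta} \wedge \det\pqty{- \frac{\lambda}{2 \pi} d \eta \cdot \tensor{\delta}{_{\alpha}^{\beta}} + \frac{\sqrt{- 1}}{2 \pi} \tensor{\Omega}{_{\alpha}^{\beta}}}$; expanding the determinant gives the stated sum over $m$. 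To repair your proof you must either quote this boundary formula or re-derive it via Chern's transgression of $c_{n + 1}$; that is the substantive missing ingredient, after which the remaining computation is the one the paper performs.
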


Similar to \cref{thm:Marugame's-CR-invariants-of-Chern-character-type},
we obtain an expression of $\mu$ for the circle bundle case
in terms of the Einstein constant and characteristic numbers.

\begin{corollary}
\label{cor:Burns-Epstein-invariant-for-tubes}
	Let $(L, h)$ be a Hermitian holomorphic line bundle over a closed $n$-dimensional complex manifold $Y$
	such that $\omega = - \sqrt{- 1} \Theta_{h}$ defines a K\"{a}hler-Einstein metric on $Y$
	with Einstein constant $(n + 1) \lambda$.
	Denote by $(S, T^{1, 0} S, \eta)$ the circle bundle associated with $(Y, L, h)$.
	The Burns-Epstein invariant $\mu(S)$ of $S$ is given by
	\begin{equation}
		\mu(S)
		= - \lambda \sum_{m = 0}^{n}\int_{Y} (\lambda c_{1}(L))^{n - m} c_{m}(T^{1, 0} Y).
	\end{equation}
\end{corollary}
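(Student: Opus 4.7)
The plan is to deduce the corollary from \cref{thm:Burns-Epstein-invariant-for-Sasakian-eta-Einstein-manifolds} by performing fiber integration along the circle bundle projection $\pi \colon S \to Y$, in the same spirit that underlies the derivation of \cref{thm:Marugame's-CR-invariants-of-Chern-character-type} from \cref{thm:Marugame's-CR-invariant-for-Sasakian-eta-Einstein-manifolds}. I would begin by recording three standard facts about the circle bundle $S$ associated with $(Y, L, h)$, which should already be available from the paper's preliminaries on Sasakian manifolds: first, that $d\eta = \pi^{*}\omega$, since $\eta = d^{c}\log h|_{S}$ and $dd^{c}\log h = \pi^{*}\omega$ off the zero section of $L$; second, that under the canonical isomorphism $T^{1,0}S \cong \pi^{*} T^{1,0}Y$ the Tanaka-Webster connection on $S$ corresponds to the pullback of the Chern connection on $(T^{1,0}Y, g)$, so that the Tanaka-Webster curvature form $\Omega$ satisfies $c_{m}\!\pqty{\tfrac{\sqrt{-1}}{2\pi}\Omega} = \pi^{*} c_{m}(T^{1,0}Y)$; and third, that $\eta$ restricts to $d\theta$ on each fiber, yielding the fiber-integration identity $\int_{S} \eta \wedge \pi^{*}\alpha = 2\pi \int_{Y} \alpha$ for every $2n$-form $\alpha$ on $Y$.

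Next, I would relate $\omega$ to $c_{1}(L)$: since $\omega = -\sqrt{-1}\,\Theta_{h}$, a one-line computation gives $c_{1}(L) = \tfrac{\sqrt{-1}}{2\pi}\Theta_{h} = -\tfrac{1}{2\pi}\omega$, so that $-\tfrac{\lambda}{2\pi}\omega = \lambda\, c_{1}(L)$. Plugging the first two facts into the formula of \cref{thm:Burns-Epstein-invariant-for-Sasakian-eta-Einstein-manifolds} turns each $2n$-form factor on the right-hand side into the pullback of $(\lambda c_{1}(L))^{n-m}\wedge c_{m}(T^{1,0}Y)$, after which applying the fiber-integration identity to each summand cancels the remaining $2\pi$ in the denominator and produces exactly
\begin{equation*}
\mu(S) = -\lambda \sum_{m = 0}^{n} \int_{Y} (\lambda c_{1}(L))^{n - m} c_{m}(T^{1, 0} Y).
\end{equation*}

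The one non-routine ingredient is the identification of the Tanaka-Webster connection on the Sasakian circle bundle $S$ with the pullback of the Chern connection on the K\"ahler-Einstein base $Y$. This is a classical property of Sasakian structures and must already be established (or implicit) in the preliminaries, since exactly the same ingredient is what allows \cref{thm:Marugame's-CR-invariants-of-Chern-character-type} to be read off from \cref{thm:Marugame's-CR-invariant-for-Sasakian-eta-Einstein-manifolds}; once it is in place, the rest of the argument is a brief sign-tracking and fiber-integration exercise.
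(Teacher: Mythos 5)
Your proposal is correct and follows essentially the same route as the paper: apply \cref{thm:Burns-Epstein-invariant-for-Sasakian-eta-Einstein-manifolds} together with the facts $d\eta = p^{\ast}\omega$, $\Omega = p^{\ast}\Pi$, and $c_{1}(L) = -[\omega/2\pi]$ (all recorded in the Sasakian preliminaries), then integrate along the circle fibers. The ``non-routine ingredient'' you flag is indeed already established in \cref{subsection:Sasakian-manifolds}, so nothing is missing.
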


This paper is organized as follows.
In \cref{section:CR-geometry} (resp.\ \cref{section:strictly-pseudoconvex-domains}),
we recall basic facts on CR and Sasakian manifolds (resp.\ strictly pseudoconvex domains).
The renormalized connection and the invariants $\scrI_{\Phi}$ are introduced in \cref{section:global-CR-invariants-via-renormalized-characteristic-forms}.
\cref{section:global-CR-invariants-for-Sasakian-eta-Einstein-manifolds} is devoted to
proofs of \cref{thm:Marugame's-CR-invariant-for-Sasakian-eta-Einstein-manifolds,thm:Marugame's-CR-invariants-of-Chern-character-type,thm:algebraically-independence}.
\cref{section:Burns-Epstein-invariant-for-Sasakian-eta-Einstein-manifolds} deals with
the Burns-Epstein invariant.
In \cref{section:concluding-remarks},
we pose a conjecture on global CR invariants
and give a proof of it in low dimensions.

\medskip

\noindent
\emph{Notation.}
We use Einstein's summation convention and assume that 
\begin{itemize}
	\item lowercase Greek indices $\alpha, \beta, \gamma, \dots$ run from $1, \dots, n$;
	\item lowercase Latin indices $a, b, c, \dots$ run from $1, \dots , n, \infty$.
\end{itemize}

Suppose that a function $I(\epsilon)$ admits an asymptotic expansion,
as $\epsilon \to + 0$,
\begin{equation}
	I(\epsilon)
	=\sum_{m = 1}^{k} a_{m} \epsilon^{- m} + b \log \epsilon + O(1).
\end{equation}
Then the logarithmic part $\lp I(\epsilon)$ of $I(\epsilon)$ is the constant $b$.

\medskip

\section{CR geometry}
\label{section:CR-geometry}

\subsection{CR structures}
\label{subsection:CR-structures}

Let $M$ be a smooth $(2n + 1)$-dimensional manifold without boundary.
A \emph{CR structure} is a rank $n$ complex subbundle $T^{1,0}M$
of the complexified tangent bundle $TM \otimes \mathbb{C}$ such that
\begin{equation}
	T^{1, 0}M \cap T^{0, 1}M = 0, \qquad
	[\Gamma(T^{1, 0} M), \Gamma(T^{1, 0} M)] \subset \Gamma(T^{1, 0} M),
\end{equation}
where $T^{0, 1} M$ is the complex conjugate of $T^{1, 0} M$ in $T M \otimes \mathbb{C}$.
Set $H M = \Re T^{1,0}M$
and let $J \colon H M \to H M$ be the unique complex structure on $H M$ 
such that
\begin{equation}
	T^{1,0} M = \ker(J - \sqrt{- 1} \colon H M \otimes \mathbb{C}
	\to H M \otimes \mathbb{C}).
\end{equation}
A typical example of CR manifolds is a real hypersurface $M$ in an $(n + 1)$-dimensional complex manifold $X$;
this $M$ has the canonical CR structure
\begin{equation}
	T^{1, 0} M
	= T^{1, 0} X |_{M} \cap (T M \otimes \mathbb{C}).
\end{equation}

A CR structure $T^{1, 0} M$ is said to be \emph{strictly pseudoconvex}
if there exists a nowhere-vanishing real one-form $\theta$ on $M$
such that
$\theta$ annihilates $T^{1, 0} M$ and
\begin{equation}
	- \sqrt{- 1} d \theta (Z, \overline{Z}) > 0, \qquad
	0 \neq Z \in T^{1, 0} M.
\end{equation}
We call such a one-form a \emph{contact form}.
The triple $(M, T^{1, 0} M, \theta)$ is called a \emph{pseudo-Hermitian manifold}.
Denote by $T$ the \emph{Reeb vector field} with respect to $\theta$; 
that is,
the unique vector field satisfying
\begin{equation}
	\theta(T) = 1, \qquad T \contr d\theta = 0.
\end{equation}
Let $(\tensor{Z}{_{\alpha}})$ be a local frame of $T^{1, 0} M$,
and set $\tensor{Z}{_{\ovxa}} = \overline{\tensor{Z}{_{\alpha}}}$.
Then
$(T, \tensor{Z}{_{\alpha}}, \tensor{Z}{_{\ovxa}})$ gives a local frame of $T M \otimes \mathbb{C}$,
called an \emph{admissible frame}.
Its dual frame $(\theta, \tensor{\theta}{^{\alpha}}, \tensor{\theta}{^{\ovxa}})$
is called an \emph{admissible coframe}.
The two-form $d \theta$ is written as
\begin{equation}
	d \theta = \sqrt{- 1} \tensor{l}{_{\alpha}_{\ovxb}} \tensor{\theta}{^{\alpha}} \wedge \tensor{\theta}{^{\ovxb}},
\end{equation}
where $(\tensor{l}{_{\alpha}_{\ovxb}})$ is a positive definite Hermitian matrix.
We use $\tensor{l}{_{\alpha}_{\ovxb}}$ and its inverse $\tensor{l}{^{\alpha} ^{\ovxb}}$
to raise and lower indices of tensors.

\subsection{Tanaka-Webster connection and pseudo-Einstein condition}
\label{subsection:TW-connection-and-pE-condition}

A contact form $\theta$ induces a canonical connection $\nabla$,
called the \emph{Tanaka-Webster connection} with respect to $\theta$.
It is defined by
\begin{equation}
	\nabla T
	= 0,
	\quad
	\nabla \tensor{Z}{_{\alpha}}
	= \tensor{\omega}{_{\alpha}^{\beta}} \tensor{Z}{_{\beta}},
	\quad
	\nabla \tensor{Z}{_{\ovxa}}
	= \tensor{\omega}{_{\ovxa}^{\ovxb}} \tensor{Z}{_{\ovxb}}
	\quad
	\pqty{ \tensor{\omega}{_{\ovxa}^{\ovxb}}
	= \overline{\tensor{\omega}{_{\alpha}^{\beta}}} }
\end{equation}
with the following structure equations:
\begin{gather}
\label{eq:str-eq-of-TW-conn1}
	d \tensor{\theta}{^{\beta}}
	= \tensor{\theta}{^{\alpha}} \wedge \tensor{\omega}{_{\alpha}^{\beta}}
	+ \tensor{A}{^{\beta}_{\ovxa}} \theta \wedge \tensor{\theta}{^{\ovxa}}, \\
\label{eq:str-eq-of-TW-conn2}
	d \tensor{l}{_{\alpha}_{\ovxb}}
	= \tensor{\omega}{_{\alpha}^{\gamma}} \tensor{l}{_{\gamma}_{\ovxb}}
	+ \tensor{l}{_{\alpha}_{\ovxg}} \tensor{\omega}{_{\ovxb}^{\ovxg}}.
\end{gather}
The tensor $\tensor{A}{_{\alpha}_{\beta}} = \overline{\tensor{A}{_{\ovxa}_{\ovxb}}}$
is shown to be symmetric and is called the \emph{Tanaka-Webster torsion}.
We denote the components of a successive covariant derivative of a tensor
by subscripts preceded by a comma,
for example, $\tensor{K}{_{\alpha}_{\ovxb}_{,}_{\gamma}}$;
we omit the comma if the derivatives are applied to a function.
As we noted above,
we use $\tensor{l}{_{\alpha}_{\ovxb}}$ and its inverse $\tensor{l}{^{\alpha} ^{\ovxb}}$
to raise and lower indices of tensors;
for example,
$\tensor{K}{_{\alpha}_{\ovxb}_{,}^{\gamma}}
= \tensor{K}{_{\alpha}_{\ovxb}_{,}_{\overline{\rho}}} \tensor{l}{^{\gamma}^{\overline{\rho}}}$.

The curvature form
$\tensor{\Omega}{_{\alpha}^{\beta}} = d \tensor{\omega}{_{\alpha}^{\beta}}
- \tensor{\omega}{_{\alpha}^{\gamma}} \wedge \tensor{\omega}{_{\gamma}^{\beta}}$
of the Tanaka-Webster connection satisfies
\begin{equation}
\label{eq:curvature-form-of-TW-connection}
	\begin{split}
		\tensor{\Omega}{_{\alpha}^{\beta}}
		=& \tensor{R}{_{\alpha}^{\beta}_{\rho}_{\ovxs}} \tensor{\theta}{^{\rho}} \wedge \tensor{\theta}{^{\ovxs}}
			- \tensor{A}{_{\alpha}_{\gamma}_{,}^{\beta}} \theta \wedge \tensor{\theta}{^{\gamma}}
			+ \tensor{A}{^{\beta}_{\ovxg}_{,}_{\alpha}} \theta \wedge \tensor{\theta}{^{\ovxg}}  \\
		&- \sqrt{- 1} \tensor{A}{_{\alpha}_{\gamma}} \tensor{\theta}{^{\gamma}} \wedge \tensor{\theta}{^{\beta}}
			+ \sqrt{- 1} \tensor{l}{_{\alpha}_{\ovxg}} \tensor{A}{^{\beta}_{\ovxr}}
			\tensor{\theta}{^{\ovxg}} \wedge \tensor{\theta}{^{\ovxr}}.
	\end{split}
\end{equation}
We call the tensor $\tensor{R}{_{\alpha}^{\beta}_{\rho}_{\ovxs}}$
the \emph{Tanaka-Webster curvature}.
This tensor has the symmetry 
\begin{equation}
	\tensor{R}{_{\alpha}_{\ovxb}_{\rho}_{\ovxs}}
	= \tensor{R}{_{\rho}_{\ovxb}_{\alpha}_{\ovxs}}
	= \tensor{R}{_{\alpha}_{\ovxs}_{\rho}_{\ovxb}}.
\end{equation}
Contraction of indices gives the \emph{Tanaka-Webster Ricci curvature}
$\tensor{\Ric}{_{\rho}_{\ovxs}} = \tensor{R}{_{\alpha}^{\alpha}_{\rho}_{\ovxs}}$
and the \emph{Tanaka-Webster scalar curvature}
$\Scal = \tensor{\Ric}{_{\rho}^{\rho}}$.
The \emph{Chern tensor} $\tensor{S}{_{\alpha}_{\ovxb}_{\rho}_{\ovxs}}$ is defined by
\begin{equation}
	\tensor{S}{_{\alpha}_{\ovxb}_{\rho}_{\ovxs}}
	= \tensor{R}{_{\alpha}_{\ovxb}_{\rho}_{\ovxs}}
		- \tensor{P}{_{\alpha}_{\ovxb}} \tensor{l}{_{\rho}_{\ovxs}}
		- \tensor{P}{_{\rho}_{\ovxb}} \tensor{l}{_{\alpha}_{\ovxs}}
		- \tensor{P}{_{\rho}_{\ovxs}} \tensor{l}{_{\alpha}_{\ovxb}}
		- \tensor{P}{_{\alpha}_{\ovxs}} \tensor{l}{_{\rho}_{\ovxb}},
\end{equation}
where
\begin{equation}
	\tensor{P}{_{\alpha}_{\ovxb}}
	= \frac{1}{n + 2} \pqty{\tensor{\Ric}{_{\alpha}_{\ovxb}} - \frac{\Scal}{2 (n + 1)} \tensor{l}{_{\alpha}_{\ovxb}}}.
\end{equation}
A contact form $\theta$ is said to be \emph{pseudo-Einstein}
if the following two equalities hold:
\begin{equation}
\label{eq:pseudo-Einstein-condition}
	\tensor{\Ric}{_{\alpha}_{\overline{\beta}}}
	= \frac{1}{n} \Scal \cdot \tensor{l}{_{\alpha}_{\overline{\beta}}},
	\qquad
	\tensor{\Scal}{_{\alpha}}
	= \sqrt{- 1} n \tensor{A}{_{\alpha}_{\beta}_{,}^{\beta}}.
\end{equation}
From Bianchi identities for the Tanaka-Webster connection,
we obtain
\begin{equation}
	\tensor{\pqty{\tensor{\Ric}{_{\alpha}_{\overline{\beta}}} - \frac{1}{n} \Scal \cdot \tensor{l}{_{\alpha}_{\overline{\beta}}}}}
	{_{,}^{\overline{\beta}}}
	= \frac{n - 1}{n} (\tensor{\Scal}{_{\alpha}} - \sqrt{- 1} n \tensor{A}{_{\alpha}_{\beta}_{,}^{\beta}});
\end{equation}
see~\cite{Hirachi2014}*{Lemma 5.7(iii)} for example.
Hence the latter equality of \cref{eq:pseudo-Einstein-condition} follows from the former one if $n \geq 2$.
On the other hand,
the former equality of \cref{eq:pseudo-Einstein-condition} automatically holds if $n = 1$,
and the latter one is a non-trivial condition.

\subsection{Sasakian manifolds}
\label{subsection:Sasakian-manifolds}

Sasakian manifolds are an important class of pseudo-Hermitian manifolds.
See~\cite{Boyer-Galicki2008} for a comprehensive introduction to Sasakian manifolds.

A \emph{Sasakian manifold} is a pseudo-Hermitian manifold $(S, T^{1,0}S, \eta)$
with vanishing Tanaka-Webster torsion.
This condition is equivalent to that the Reeb vector field $\xi$ with respect to $\eta$
preserves the CR structure $T^{1, 0} S$.
An almost complex structure $I$ on the cone $C(S) = \mathbb{R}_{+} \times S$ of $S$
is defined by
\begin{equation}
	I(a (r \pdvf{}{r}) + b \xi + V) = - b (r \pdvf{}{r}) + a \xi + J V,
\end{equation}
where $r$ is the coordinate of $\mathbb{R}_{+}$,
$a, b \in \mathbb{R}$, and $V \in H S$.
The bundle $T^{1, 0} C(S)$ of $(1, 0)$-vectors with respect to $I$
is given by
\begin{equation}
	T^{1, 0} C(S)
	= \mathbb{C}(r \pdvf{}{r} - \sqrt{- 1} \xi) \oplus T^{1, 0} S.
\end{equation}
The vanishing of the Tanaka-Webster torsion implies that
$I$ is integrable;
that is,
$(C(S), I)$ is a complex manifold.
Moreover,
the one-form $\eta$ is equal to $d^{c} \log r^{2}$.
In what follows,
we identify $S$ with the level set $\{1\} \times S \subset C(S)$.

A $(2 n + 1)$-dimensional Sasakian manifold $(S, T^{1, 0} S, \eta)$
is said to be \emph{Sasakian $\eta$-Einstein with Einstein constant $(n + 1) \lambda$}
if the Tanaka-Webster Ricci curvature satisfies
\begin{equation}
	\tensor{\Ric}{_{\alpha}_{\ovxb}}
	= (n + 1) \lambda \tensor{l}{_{\alpha}_{\ovxb}}.
\end{equation}
Note that $\eta$ is a pseudo-Einstein contact form on $S$.

A typical example of Sasakian manifolds
is the circle bundle associated with a negative Hermitian line bundle.
Let $Y$ be an $n$-dimensional complex manifold
and $(L, h)$ a Hermitian holomorphic line bundle over $Y$
such that
\begin{equation}
	\omega = - \sqrt{-1} \Theta_{h} = d d^{c} \log h
\end{equation}
is a K\"{a}hler form on $Y$.
Note that $c_{1}(L) = - [\omega / 2 \pi]$.
Consider the circle bundle
\begin{equation}
	S = \Set{ v \in L | h(v, v) = 1}
\end{equation}
over $Y$,
which is a real hypersurface in the total space of $L$.
The one-form $\eta = d^{c} \log h |_{S}$
is a connection one-form of the principal $S^{1}$-bundle $p \colon S \to Y$
and satisfies $d \eta = p^{\ast} \omega$.
Moreover,
the natural CR structure $T^{1, 0} S$ coincides with the horizontal lift of $T^{1, 0} Y$ with respect to $\eta$.
Since $\omega$ is a K\"{a}hler form,
we have
\begin{equation}
	- \sqrt{- 1} d \eta (Z, \overline{Z})
	= - \sqrt{- 1} \omega (p_{\ast} Z, p_{\ast} \overline{Z})
	> 0
\end{equation}
for all non-zero $Z \in T^{1, 0} S$.
This implies that $(S, T^{1, 0} S)$ is a strictly pseudoconvex CR manifold
and $\eta$ is a contact form on $S$.
Note that the Reeb vector field $\xi$ with respect to $\eta$
is the generator of the $S^{1}$-action on $S$.

Consider the Tanaka-Webster connection with respect to $\eta$.
Take a local coordinate $(z^{1}, \dots , z^{n})$ of $Y$.
The K\"{a}hler form $\omega$ is written as
\begin{equation}
	\omega
	= \sqrt{- 1} \tensor{g}{_{\alpha}_{\ovxb}} d z^{\alpha} \wedge d \ovz^{\beta},
\end{equation}
where $(\tensor{g}{_{\alpha}_{\ovxb}})$ is a positive definite Hermitian matrix.
Let $Z_{\alpha}$ be the horizontal lift of $\pdvf{}{z^{\alpha}}$.
Then $(\xi, Z_{\alpha}, Z_{\ovxa} = \overline{Z_{\alpha}})$
is an admissible frame on $S$.
The corresponding admissible coframe is given by
$(\eta, \theta^{\alpha} = p^{\ast} (d z^{\alpha}), \theta^{\ovxa} = p^{\ast} (d \ovz^{\alpha}))$.
Since $d \eta = p^{*} \omega$,
we have
\begin{equation}
	d \eta = \sqrt{- 1} (p^{\ast} \tensor{g}{_{\alpha}_{\ovxb}}) \theta^{\alpha} \wedge \theta^{\ovxb},
\end{equation}
which implies $\tensor{l}{_{\alpha}_{\ovxb}} = p^{\ast} \tensor{g}{_{\alpha}_{\ovxb}}$.
The connection form $\tensor{\pi}{_{\alpha}^{\beta}}$ of the K\"{a}hler metric
with respect to the frame $(\pdvf{}{\tensor{z}{^{\alpha}}})$ satisfies
\begin{equation} \label{eq:structure-equation-for-Kahler-metric}
	0 = d (d z^{\beta}) = d \tensor{z}{^{\alpha}} \wedge \tensor{\pi}{_{\alpha}^{\beta}},
	\qquad
	d \tensor{g}{_{\alpha}_{\ovxb}}
	= \tensor{\pi}{_{\alpha}^{\gamma}} \tensor{g}{_{\gamma}_{\ovxb}}
	+ \tensor{g}{_{\alpha}_{\ovxg}} \tensor{\pi}{_{\ovxb}^{\ovxg}}
	\qquad
	\pqty{\tensor{\pi}{_{\ovxa}^{\ovxb}} = \overline{\tensor{\pi}{_{\alpha}^{\beta}}}}.
\end{equation}
We write as $\tensor{\Pi}{_{\alpha}^{\beta}}$ the curvature form of the K\"{a}hler metric.
Pulling back \cref{eq:structure-equation-for-Kahler-metric} by $p$ gives
\begin{equation}
	d \theta^{\beta} = \tensor{\theta}{^{\alpha}} \wedge (p^{*} \tensor{\pi}{_{\alpha}^{\beta}}),
	\qquad
	d \tensor{l}{_{\alpha}_{\ovxb}}
	= (p^{\ast} \tensor{\pi}{_{\alpha}^{\gamma}}) \tensor{l}{_{\gamma}_{\ovxb}}
	+ \tensor{l}{_{\alpha}_{\ovxg}} (p^{\ast} \tensor{\pi}{_{\ovxb}^{\ovxg}}).
\end{equation}
This yields that
$\tensor{\omega}{_{\alpha}^{\beta}} = p^{\ast} \tensor{\pi}{_{\alpha}^{\beta}}$,
and the Tanaka-Webster torsion vanishes identically;
that is,
$(S, T^{1, 0} S, \eta)$ is a Sasakian manifold.
Moreover,
the curvature form $\tensor{\Omega}{_{\alpha}^{\beta}}$ of the Tanaka-Webster connection
is given by $\tensor{\Omega}{_{\alpha}^{\beta}} = p^{\ast} \tensor{\Pi}{_{\alpha}^{\beta}}$.
In particular,
$(S, T^{1, 0} S, \eta)$ is a Sasakian $\eta$-Einstein manifold with Einstein constant $(n + 1) \lambda$
if and only if $\omega$ defines a K\"{a}hler-Einstein metric with Einstein constant $(n + 1) \lambda$.

\section{Strictly pseudoconvex domains}
\label{section:strictly-pseudoconvex-domains}

Let $\Omega$ be a relatively compact domain in an $(n + 1)$-dimensional complex manifold $X$
with smooth boundary $M = \bdry \Omega$.
There exists a smooth function $\rho$ on $X$ such that
\begin{equation}
	\Omega = \rho^{-1}((- \infty, 0)), \quad
	M = \rho^{- 1}(0), \quad
	d \rho \neq 0 \ \text{on} \ M;
\end{equation}
such a $\rho$ is called a \emph{defining function} of $\Omega$.
A domain $\Omega$ is said to be \emph{strictly pseudoconvex}
if we can take a defining function $\rho$ of $\Omega$ that is strictly plurisubharmonic near $M$.
The boundary of a strictly pseudoconvex domain
is a closed strictly pseudoconvex real hypersurface.
Conversely,
it is known that
any closed connected strictly pseudoconvex CR manifold of dimension at least five can be realized as
the boundary of a strictly pseudoconvex domain
in a complex projective manifold~\cites{Boutet_de_Monvel1975,Harvey-Lawson1975,Lempert1995}.

\subsection{Graham-Lee connection}
\label{subsection:Graham-Lee-connection}

Let $\rho$ be a defining function of a strictly pseudoconvex domain $\Omega$
in an $(n + 1)$-dimensional complex manifold $X$.
There exists the unique $(1, 0)$-vector field $\tensor{\wtZ}{_{\infty}}$ near the boundary such that
\begin{equation}
	\tensor{\wtZ}{_{\infty}} \rho = 1,
	\qquad
	\tensor{\wtZ}{_{\infty}} \contr \del \delb \rho
	= \kappa \delb \rho
\end{equation}
for a smooth function $\kappa$,
which is called the \emph{transverse curvature}.
Take a local frame $(\tensor{\wtZ}{_{\alpha}})$ of $\Ker \del \rho$,
and let $(\tensor{\wtxth}{^{\alpha}}, \tensor{\wtxth}{^{\infty}} = \del \rho)$ be
the dual frame of $(\tensor{\wtZ}{_{\alpha}}, \tensor{\wtZ}{_{\infty}})$.
By the strict pseudoconvexity,
there exists a positive Hermitian matrix $(\tensor{\wtl}{_{\alpha}_{\ovxb}})$ such that
\begin{equation}
	d d^{c} \rho
	= \sqrt{- 1} \tensor{\wtl}{_{\alpha}_{\ovxb}} \tensor{\wtxth}{^{\alpha}} \wedge \tensor{\wtxth}{^{\ovxb}}
		+ \kappa d \rho \wedge d^{c} \rho.
\end{equation}
We use $\tensor{\wtl}{_{\alpha}_{\ovxb}}$ and its inverse $\tensor{\wtl}{^{\alpha} ^{\ovxb}}$
to raise and lower indices of tensors.
The \emph{Graham-Lee connection} $\widetilde{\nabla}$
is the unique connection on $T X$ defined by
\begin{equation}
	\widetilde{\nabla} \tensor{\wtZ}{_{\alpha}}
	= \tensor{\wtxo}{_{\alpha}^{\beta}} \tensor{\wtZ}{_{\beta}},
	\quad
	\widetilde{\nabla} \tensor{\wtZ}{_{\ovxa}}
	= \tensor{\wtxo}{_{\ovxa}^{\ovxb}} \tensor{\wtZ}{_{\ovxb}}
	\quad
	\pqty{ \tensor{\wtxo}{_{\ovxa}^{\ovxb}}
	= \overline{\tensor{\wtxo}{_{\alpha}^{\beta}}} },
	\quad
	\widetilde{\nabla} \tensor{\wtZ}{_{\infty}}
	= \widetilde{\nabla} \tensor{\wtZ}{_{\overline{\infty}}}
	= 0,
\end{equation}
with the following structure equations:
\begin{gather}
	d \tensor{\wtxth}{^{\beta}}
	= \tensor{\wtxth}{^{\alpha}} \wedge \tensor{\wtxo}{_{\alpha}^{\beta}}
		- \sqrt{- 1} \tensor{\wtA}{^{\beta}_{\ovxs}} \del \rho \wedge \tensor{\wtxth}{^{\ovxs}}
		- \tensor{\kappa}{^{\beta}} \del \rho \wedge \delb \rho
		+ \frac{1}{2} \kappa d \rho \wedge \tensor{\wtxth}{^{\beta}}, \\
	d \tensor{\wtl}{_{\alpha}_{\ovxb}}
	= \tensor{\wtxo}{_{\alpha}^{\gamma}} \tensor{\wtl}{_{\gamma}_{\ovxb}}
	+ \tensor{\wtl}{_{\alpha}_{\ovxg}} \tensor{\wtxo}{_{\ovxb}^{\ovxg}};
\end{gather}
see~\cite{Graham-Lee1988}*{Proposition 1.1} for a proof of the existence and uniqueness.
Note that the restriction of $\widetilde{\nabla}$ to $M$
coincides with the Tanaka-Webster connection with respect to $\theta = d^{c} \rho |_{M}$.

\subsection{Fefferman defining functions}
\label{subsection:Fefferman-defining-function}

Let $\Omega$ be a strictly pseudoconvex domain
in a complex manifold $X$ of dimension $n + 1$.
Fix a local coordinate $z$ on $X$
near a point of the boundary $M = \bdry \Omega$.
A differential operator $\mathcal{J}_{z}$ is defined by
\begin{equation}
	\mathcal{J}_{z}[\phi]
	= - \det
	\begin{pmatrix}
		\phi & \partial \phi / \partial z^{i} \\
		\partial \phi / \partial \ovz^{j} & \partial^{2} \phi / \partial z^{i} \partial \ovz^{j}
	\end{pmatrix}
	.
\end{equation}
A \emph{Fefferman defining function}
is a defining function of $\Omega$ such that
\begin{equation}
	d d^{c} \log \mathcal{J}_{z}[\rho]
	= d d^{c} O(\rho^{n + 2}).
\end{equation}
This condition is independent of the choice of a local coordinate $z$.
It is known that $\Omega$ admits a Fefferman defining function
if and only if the boundary $M$ has a pseudo-Einstein contact form;
see~\cite{Lee1988}*{Theorem 4.2}, \cite{Hirachi1993}*{Lemma 7.2}, and \cite{Hislop-Perry-Tang2008}*{Proposition 2.10}.

\section{Global CR invariants via renormalized characteristic forms}
\label{section:global-CR-invariants-via-renormalized-characteristic-forms}

\subsection{Burns-Epstein's renormalized connection}
\label{subsection:renormalized-connection}

Let $\Omega$ be a strictly pseudoconvex domain of dimension $n + 1$.
Assume that its boundary $M$ admits a pseudo-Einstein contact form.
For a Fefferman defining function $\rho$ of $\Omega$,
the $(1, 1)$-form
\begin{equation}
	\omega_{+}
	= - d d^{c} \log (- \rho)
\end{equation}
defines a K\"{a}hler metric $g_{+}$ near the boundary.
We extend $g_{+}$ to a Hermitian metric on $\Omega$.
Let $\tensor{\psi}{_{a}^{b}}$ be the Chern connection with respect to $g_{+}$.
This connection diverges on the boundary since so does $g_{+}$.
The \emph{renormalized connection form} $\tensor{\theta}{_{a}^{b}}$ is defined by
\begin{equation}
	\tensor{\theta}{_{a}^{b}}
	= \tensor{\psi}{_{a}^{b}}
		+ \frac{1}{\rho} (\tensor{\delta}{_{a}^{b}} \tensor{\rho}{_{c}}
		+ \tensor{\delta}{_{c}^{b}} \tensor{\rho}{_{a}}) \tensor{\wtxth}{^{c}}. 
\end{equation}
This connection form can be extended smoothly up to the boundary;
see~\cite{Marugame2021}*{Proposition 4.1} for example.
The corresponding curvature form is denoted by $\tensor{\Theta}{_{a}^{b}}$,
which satisfies
\begin{equation}
\label{eq:trace-of-renormalized-curvature}
	\tr \Theta
	= - d d^{c} \log \mathcal{J}_{z}[\rho]
	= d d^{c} O(\rho^{n + 2});
\end{equation}
see~\cite{Marugame2016}*{(4.7)}.
Near the boundary,
$\tensor{\theta}{_{a}^{b}}$ is written in terms of the Graham-Lee connection.

\begin{lemma}[\cite{Marugame2016}*{Proposition 3.5}]
\label{lem:renormalized-connection}
	For the frame $(\tensor{\wtxth}{^{\alpha}}, \wtxth^{\infty} = \partial \rho)$,
	\begin{gather}
		\tensor{\theta}{_{\alpha}^{\beta}}
		= \tensor{\wtxo}{_{\alpha}^{\beta}}
			+ \frac{1}{2} \kappa (\del \rho - \delb \rho) \tensor{\delta}{_{\alpha}^{\beta}}, \\
		\tensor{\theta}{_{\infty}^{\beta}}
		= \kappa \tensor{\wtxth}{^{\beta}}
			- \sqrt{- 1} \tensor{\wtA}{^{\beta}_{\ovxg}} \tensor{\wtxth}{^{\ovxg}}
			- \tensor{\kappa}{^{\beta}} \delb \rho, \\
		\tensor{\theta}{_{\alpha}^{\infty}}
		= - \tensor{\wtl}{_{\alpha}_{\ovxg}} \tensor{\wtxth}{^{\ovxg}}
			- \rho (1 - \kappa \rho)^{- 1} \tensor{\kappa}{_{\alpha}} \del \rho
			+ \sqrt{- 1} \rho (1 - \kappa \rho)^{- 1} \tensor{\wtA}{_{\alpha}_{\beta}} \tensor{\wtxth}{^{\beta}}, \\
		\tensor{\theta}{_{\infty}^{\infty}}
		= - \kappa \delb \rho
			- \rho \kappa^{2} (1 - \kappa \rho)^{- 1} \del \rho
			- \rho (1 - \kappa \rho)^{- 1} \del \kappa.
	\end{gather}
\end{lemma}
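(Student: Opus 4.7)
The plan is to verify the four formulas directly by computing the Chern connection $\tensor{\psi}{_{a}^{b}}$ of $g_{+}$ in the frame $(\tensor{\wtZ}{_{\alpha}}, \tensor{\wtZ}{_{\infty}})$ and then adding the correction term $\rho^{-1}(\tensor{\delta}{_{a}^{b}}\tensor{\rho}{_{c}}+\tensor{\delta}{_{c}^{b}}\tensor{\rho}{_{a}})\tensor{\wtxth}{^{c}}$ prescribed in the definition of $\tensor{\theta}{_{a}^{b}}$. First I would write $\omega_{+}$ explicitly in the admissible frame. Using the identity $d d^{c} \log(-\rho) = \rho^{-1} d d^{c} \rho - \rho^{-2} d\rho\wedge d^{c}\rho$ together with the Graham-Lee expression $d d^{c} \rho = \sqrt{-1}\,\tensor{\wtl}{_{\alpha}_{\ovxb}}\tensor{\wtxth}{^{\alpha}}\wedge\tensor{\wtxth}{^{\ovxb}} + \kappa\, d\rho\wedge d^{c}\rho$ and $d\rho\wedge d^{c}\rho = \sqrt{-1}\,\tensor{\wtxth}{^{\infty}}\wedge\tensor{\wtxth}{^{\overline{\infty}}}$, one reads off the non-zero components of the Hermitian metric
\begin{equation*}
\tensor{g}{_{\alpha}_{\ovxb}} = -\rho^{-1}\tensor{\wtl}{_{\alpha}_{\ovxb}}, \qquad \tensor{g}{_{\infty}_{\overline{\infty}}} = \rho^{-2}(1-\kappa\rho),
\end{equation*}
with inverse components $\tensor{g}{^{\ovxb}^{\alpha}}=-\rho\,\tensor{\wtl}{^{\ovxb}^{\alpha}}$ and $\tensor{g}{^{\overline{\infty}}^{\infty}}=\rho^{2}(1-\kappa\rho)^{-1}$.

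Next I would determine $\tensor{\psi}{_{a}^{b}}$ by imposing the two defining properties of the Chern connection: metric compatibility $d\tensor{g}{_{a}_{\ovb}} = \tensor{\psi}{_{a}^{c}}\tensor{g}{_{c}_{\ovb}} + \tensor{g}{_{a}_{\ovc}}\overline{\tensor{\psi}{_{b}^{c}}}$, and the requirement that the $(0,1)$-part of $\tensor{\psi}{_{a}^{b}}$ coincide with the $\delb$-operator on $T^{1,0}X$ expressed in the frame $(\tensor{\wtZ}{_{a}})$. Splitting each ingredient into $(1,0)$- and $(0,1)$-parts via the Graham-Lee structure equations for $d\tensor{\wtxth}{^{\beta}}$ and $d\tensor{\wtl}{_{\alpha}_{\ovxb}}$, one obtains closed-form expressions for $\tensor{\psi}{_{\alpha}^{\beta}}$, $\tensor{\psi}{_{\alpha}^{\infty}}$, $\tensor{\psi}{_{\infty}^{\beta}}$, and $\tensor{\psi}{_{\infty}^{\infty}}$. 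These inevitably contain divergent factors of $\rho^{-1}$, as well as the factor $(1-\kappa\rho)^{-1}$, which appears whenever one raises an $\overline{\infty}$-index via $\tensor{g}{^{\overline{\infty}}^{\infty}}$.

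Finally, substituting these expressions into $\tensor{\theta}{_{a}^{b}} = \tensor{\psi}{_{a}^{b}} + \rho^{-1}(\tensor{\delta}{_{a}^{b}}\tensor{\rho}{_{c}}+\tensor{\delta}{_{c}^{b}}\tensor{\rho}{_{a}})\tensor{\wtxth}{^{c}}$, the $\rho^{-1}$ singularities in $\tensor{\psi}{_{a}^{b}}$ cancel exactly against the correction term, and the surviving pieces match the four formulas claimed. The main obstacle is step two: the careful bookkeeping required to separate each differential into Dolbeault types and to track precisely which raised index introduces a $(1-\kappa\rho)^{-1}$ factor. This is why $\tensor{\theta}{_{\alpha}^{\beta}}$ and $\tensor{\theta}{_{\infty}^{\beta}}$ remain free of that denominator, while $\tensor{\theta}{_{\alpha}^{\infty}}$ and $\tensor{\theta}{_{\infty}^{\infty}}$ carry it, since raising the relevant $\overline{\infty}$-index is unavoidable for those components. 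Once the bookkeeping is set up, the remainder of the verification is routine.
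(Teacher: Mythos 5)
Your proposal is correct and follows essentially the same route as the source of this statement: the paper itself gives no proof, quoting the lemma from Marugame's Proposition 3.5, and that proof is exactly the computation you outline --- express $\omega_{+}$, and hence $g_{+}$, in the Graham--Lee coframe (your components $g_{\alpha\overline{\beta}}=-\rho^{-1}\widetilde{l}_{\alpha\overline{\beta}}$ and $g_{\infty\overline{\infty}}=\rho^{-2}(1-\kappa\rho)$, with the stated inverses, are right), determine the Chern connection from metric compatibility together with the $(0,1)$-part condition via the Graham--Lee structure equations, and cancel the $\rho^{-1}$ poles against the renormalizing term $\rho^{-1}(\delta_{a}^{\ b}\rho_{c}+\delta_{c}^{\ b}\rho_{a})\widetilde{\theta}^{c}$. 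Your bookkeeping of where the $(1-\kappa\rho)^{-1}$ factors enter (only through raising an $\overline{\infty}$ index with $g^{\overline{\infty}\infty}$) is likewise consistent with the four stated formulas.
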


\subsection{Global CR invariants via renormalized characteristic forms}

Let $\Omega$ be an $(n + 1)$-dimensional strictly pseudoconvex domain with boundary $M$.
Assume that $M$ admits a pseudo-Einstein contact form,
and take a Fefferman defining function $\rho$ of $\Omega$.

\begin{definition}[\cite{Marugame2021}*{Theorem 1.1 and Proposition 4.9}]
	Let $\Phi$ be a $GL(n + 1, \mathbb{C})$-invariant polynomial of degree at most $n$.
	Then
	\begin{equation}
		\scrI_{\Phi}(M)
		= - \sum_{m = 0}^{n} \lp \int_{\rho < - \epsilon}
			\frac{d \log (- \rho) \wedge d^{c} \log (- \rho)}{2 \pi}
			\wedge \pqty{ \frac{\omega_{+}}{2 \pi} }^{n - m}
			\wedge \Phi \pqty{ \frac{\sqrt{- 1}}{2 \pi} \Theta }
	\end{equation}
	is independent of the choice of $\rho$,
	and gives a global CR invariant of $M$.
\end{definition}

In the remainder of this section,
we show that any $\scrI_{\Phi}$ is written as a linear combination of $\scrI_{\varsigma}$,
which appears in the introduction.
It is known that $(\ch_{k})_{k = 1}^{n + 1}$ generate
the algebra of $GL(n + 1, \mathbb{C})$-invariant polynomials
and are algebraically independent over $\mathbb{C}$.
For a $GL(n + 1, \mathbb{C})$-invariant polynomial $\Phi$ of degree at most $n$,
there exists a $GL(n + 1, \mathbb{C})$-invariant polynomial $\wtxcph$ of degree at most $n - 1$
and a family $(C^{\Phi}_{\varsigma})_{\varsigma \in \Part(n)}$ of complex numbers such that
\begin{equation}
	\Phi
	= \ch_{1} \wtxcph + \sum_{\varsigma \in \Part(n)} C^{\Phi}_{\varsigma} \Phi_{\varsigma},
\end{equation}
where $\Phi_{\varsigma}$ is defined by \cref{eq:generator-of-Marugame's-invariant}.
Note that $\wtxcph$ and $C^{\Phi}_{\varsigma}$ are unique.

\begin{lemma}
\label{lem:linear-combination-of-Marugame's-invariant}
	For a $GL(n + 1, \mathbb{C})$-invariant polynomial $\Phi$ of degree at most $n$,
	\begin{equation}
		\scrI_{\Phi}
		= \sum_{\varsigma \in \Part(n)} C^{\Phi}_{\varsigma} \scrI_{\varsigma}.
	\end{equation}
	In particular,
	$\scrI_{\Phi}$ is trivial if $\Phi \equiv 0$ modulo $\ch_{1}$.
\end{lemma}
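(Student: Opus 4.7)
The plan is to exploit the linearity of $\Phi \mapsto \scrI_{\Phi}$ in $\Phi$ — which is built into the definition by the modification noted in the introduction — to reduce to showing $\scrI_{\ch_1 \Psi} = 0$ for every $GL(n + 1, \bbC)$-invariant polynomial $\Psi$ of degree at most $n - 1$. Granted this vanishing, substituting into the unique decomposition $\Phi = \ch_1 \wtxcph + \sum_{\varsigma} C^{\Phi}_{\varsigma} \Phi_{\varsigma}$ immediately yields the displayed identity. The ``in particular'' clause then follows from the algebraic independence of $(\ch_k)_{k = 1}^{n + 1}$: if $\Phi \equiv 0 \pmod{\ch_1}$, uniqueness of the decomposition forces $C^{\Phi}_{\varsigma} = 0$ for every $\varsigma$.

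To prove $\scrI_{\ch_1 \Psi} = 0$, by linearity I may take $\Psi$ homogeneous of degree $k$ with $0 \le k \le n - 1$; form-degree counting then shows that only the $m = k + 1$ summand in the defining sum for $\scrI_{\ch_1 \Psi}$ can contribute. Expanding $\omega_{+} = \rho^{-2} d\rho \wedge d^{c}\rho - \rho^{-1} dd^{c}\rho$ and using $(d\rho \wedge d^{c}\rho)^{2} = 0$, one finds
\begin{equation*}
	\frac{d\log(-\rho) \wedge d^{c}\log(-\rho)}{2\pi} \wedge \pqty{\frac{\omega_{+}}{2\pi}}^{n - k - 1}
	= \frac{(-1)^{n - k - 1}}{(2\pi)^{n - k} \rho^{n - k + 1}}\, d\rho \wedge d^{c}\rho \wedge (dd^{c}\rho)^{n - k - 1}.
\end{equation*}
By \cref{eq:trace-of-renormalized-curvature}, locally $\tr \Theta = dd^{c} g$ for a smooth function $g = O(\rho^{n + 2})$. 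Writing $g = \rho^{n + 2} h$ and expanding $\del \delb g$ in the Graham-Lee coframe $(\wtxth^{\alpha}, \del \rho)$, the $\wtxth^{\alpha} \wedge \wtxth^{\ovxb}$-component equals $(n + 2) \rho^{n + 1} h\, \wtl_{\alpha \ovxb}\, \wtxth^{\alpha} \wedge \wtxth^{\ovxb} + O(\rho^{n + 2})$, whereas the more singular $\del \rho \wedge \delb \rho$-component (only $O(\rho^{n})$) is precisely the one annihilated by the wedge with $d\rho \wedge d^{c}\rho = \sqrt{-1}\, \del \rho \wedge \delb \rho$.

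Combining the two estimates, the integrand defining $\scrI_{\ch_1 \Psi}$ has magnitude $O(\rho^{(n + 1) - (n - k + 1)}) = O(\rho^{k})$ and therefore extends smoothly to $\bar{\Omega}$ whenever $k \ge 0$. Consequently the truncated integral $\int_{\rho < -\epsilon}$ differs from its $\epsilon = 0$ value by $O(\epsilon)$ as $\epsilon \to 0^{+}$, so its asymptotic expansion contains no $\log \epsilon$ term, and its logarithmic part vanishes. The main obstacle is precisely the Graham-Lee bookkeeping just sketched: one must verify that the naively most singular $\del \rho \wedge \delb \rho$-component of $\del \delb g$ is exactly the one killed by the wedge with $d\rho \wedge d^{c}\rho$, leaving only the tangential part — which enjoys the better bound $O(\rho^{n + 1})$. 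The remaining steps (linearity, form-degree counting, and the passage from a bounded integrand to the absence of a $\log \epsilon$-term) are essentially formal.
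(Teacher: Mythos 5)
Your proposal is correct and follows essentially the same route as the paper: reduce by linearity to showing $\scrI_{\ch_{1}\Psi} = 0$, use \cref{eq:trace-of-renormalized-curvature} to see that $\ch_{1}(\Theta) = O(\rho^{n+1})$ modulo $d\rho$, and conclude that the integrand is bounded up to the boundary, so the logarithmic part vanishes. The only cosmetic difference is that you isolate the single contributing summand $m = k + 1$ by homogeneity, where the paper disposes of the $m = 0$ term by a degree-exceeds-dimension argument and bounds all $1 \leq m \leq n$ terms uniformly.
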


\begin{proof}
	It suffices to show that $\scrI_{\Phi}$ is trivial if $\Phi$ is of the form $\ch_{1} \wtxcph$.
	\cref{eq:trace-of-renormalized-curvature} implies that
	\begin{equation}
		\ch_{1}(\Theta)
		= O(\rho^{n + 1})
		\qquad
		\text{mod $d \rho$}.
	\end{equation}
	Hence,
	for $1 \leq m \leq n$,
	\begin{align}
		&\frac{d \log (- \rho) \wedge d^{c} \log (- \rho)}{2 \pi}
			\wedge \pqty{ \frac{\omega_{+}}{2 \pi} }^{n - m}
			\wedge \Phi \pqty{ \frac{\sqrt{- 1}}{2 \pi} \Theta } \\
		&=  \frac{d \rho \wedge d^{c} \rho}{2 \pi \rho^{2}}
			\wedge \pqty{ \frac{d d^{c} \rho}{2 \pi \rho} }^{n - m}
			\wedge \ch_{1} \pqty{ \frac{\sqrt{- 1}}{2 \pi} \Theta }
			\wedge \wtxcph  \pqty{ \frac{\sqrt{- 1}}{2 \pi} \Theta } \\
		&= O(1).
	\end{align}
	Therefore the integral
	\begin{equation}
		\int_{\rho < - \epsilon}
			\frac{d \log (- \rho) \wedge d^{c} \log (- \rho)}{2 \pi}
			\wedge \pqty{ \frac{\omega_{+}}{2 \pi} }^{n - m}
			\wedge \Phi \pqty{ \frac{\sqrt{- 1}}{2 \pi} \Theta }
	\end{equation}
	has a finite limit as $\epsilon \to + 0$.
	On the other hand,
	since $\Omega$ is of complex dimension $n + 1$,
	the differential form
	\begin{equation}
		\frac{d \log (- \rho) \wedge d^{c} \log (- \rho)}{2 \pi}
			\wedge \pqty{ \frac{\omega_{+}}{2 \pi} }^{n}
			\wedge \Phi \pqty{ \frac{\sqrt{- 1}}{2 \pi} \Theta }
	\end{equation}
	is identically zero.
	Therefore $\scrI_{\Phi} = 0$.
\end{proof}

\section{Global CR invariants for Sasakian $\eta$-Einstein manifolds}
\label{section:global-CR-invariants-for-Sasakian-eta-Einstein-manifolds}

Let $(S, T^{1, 0} S, \eta)$ be a $(2 n + 1)$-dimensional Sasakian $\eta$-Einstein manifold
with Einstein constant $(n + 1) \lambda$.
Then
\begin{equation}
\label{eq:Fefferman-defining-function-of-Sasakian-eta-Einstein}
	\rho
	=
	\begin{cases}
		\lambda^{- 1} (r^{2 \lambda} - 1) & \lambda \neq 0, \\
		\log r^{2} & \lambda = 0,
	\end{cases}
\end{equation}
is a Fefferman defining function of $\Set{r < 1}$ in $C(S)$~\cite{Takeuchi2018}*{Proposition 3.1}.
Note that
\begin{equation}
	d \rho
	= (1 + \lambda \rho) d \log r^{2},
	\qquad
	d^{c} \rho
	= (1 + \lambda \rho) \eta.
\end{equation}
Let $(\eta, \tensor{\theta}{^{\alpha}}, \tensor{\theta}{^{\ovxa}})$ be an admissible coframe on $S$.
Then
\begin{equation}
	d d^{c} \rho
	= \sqrt{- 1}(1 + \lambda \rho) \tensor{l}{_{\alpha}_{\ovxb}} \tensor{\theta}{^{\alpha}} \wedge \tensor{\theta}{^{\ovxb}}
		+ \lambda (1 + \lambda \rho)^{- 1} d \rho \wedge d^{c} \rho.
\end{equation}
In particular,
\begin{equation}
\label{eq:Levi-form-and-transverse-curvature-for-Sasakian-eta-Einstein}
	\tensor{\wtl}{_{\alpha}_{\ovxb}} =
	(1 + \lambda \rho) \tensor{l}{_{\alpha}_{\ovxb}},
	\qquad
	\kappa = \lambda (1 + \lambda \rho)^{- 1}.
\end{equation}
We compute the Graham-Lee connection with respect to $\rho$.
\cref{eq:str-eq-of-TW-conn1,eq:str-eq-of-TW-conn2} yield that
\begin{align}
	d \tensor{\theta}{^{\beta}}
	&= \tensor{\theta}{^{\alpha}} \wedge \tensor{\omega}{_{\alpha}^{\beta}}
	= \tensor{\theta}{^{\alpha}} \wedge \pqty{ \tensor{\omega}{_{\alpha}^{\beta} }
		+ \frac{1}{2} \lambda (d \log r^{2}) \tensor{\delta}{_{\alpha}^{\beta}}}
		+ \frac{1}{2} \lambda (1 + \lambda \rho)^{- 1} d \rho \wedge \tensor{\theta}{^{\beta}}, \\
	d \tensor{\wtl}{_{\alpha}_{\ovxb}}
	&= \lambda (1 + \lambda \rho) d \log r^{2} \cdot \tensor{l}{_{\alpha}_{\ovxb}}
		+ (1 + \lambda \rho) d \tensor{l}{_{\alpha}_{\ovxb}} \\
	&= \pqty{ \tensor{\omega}{_{\alpha}^{\gamma}} + \frac{1}{2} \lambda (d \log r^{2}) \tensor{\delta}{_{\alpha}^{\gamma}} } \tensor{\wtl}{_{\gamma}_{\ovxb}}
		+ \tensor{\wtl}{_{\alpha}_{\ovxg}} \pqty{ \tensor{\omega}{_{\ovxb}^{\ovxg}} + \frac{1}{2} \lambda (d \log r^{2}) \tensor{\delta}{_{\ovxb}^{\ovxg}} }.
\end{align}
Hence the uniqueness of the Graham-Lee connection implies
\begin{equation}
\label{eq:GL-connection-and-torsion-for-Sasakian-eta-Einstein}
	\tensor{\wtxo}{_{\alpha}^{\beta}}
	= \tensor{\omega}{_{\alpha}^{\beta}} + \frac{1}{2} \lambda (d \log r^{2}) \tensor{\delta}{_{\alpha}^{\beta}},
	\qquad
	\tensor{\wtA}{_{\alpha}_{\beta}} = 0.
\end{equation}

\begin{lemma}
\label{lem:renormalized-connection-for-Sasakian-eta-Einstein}
	Let $(S, T^{1, 0} S, \eta)$ be a $(2 n + 1)$-dimensional Sasakian $\eta$-Einstein manifold with Einstein constant $(n + 1) \lambda$.
	For a Fefferman defining function $\rho$ given by \cref{eq:Fefferman-defining-function-of-Sasakian-eta-Einstein},
	the renormalized connection and curvature satisfy
	\begin{gather}
		\tensor{\theta}{_{\alpha}^{\beta}}
		= \tensor{\omega}{_{\alpha}^{\beta}} + \lambda (\del \log r^{2}) \tensor{\delta}{_{\alpha}^{\beta}},
		\qquad
		\tensor{\theta}{_{\infty}^{\beta}}
		= \lambda (1 + \lambda \rho)^{- 1} \tensor{\theta}{^{\beta}}, \\
		\tensor{\theta}{_{\alpha}^{\infty}}
		= - (1 + \lambda \rho) \tensor{l}{_{\alpha}_{\ovxg}} \tensor{\theta}{^{\ovxg}},
		\qquad
		\tensor{\theta}{_{\infty}^{\infty}}
		= - \lambda \delb \log r^{2}, \\
		\tensor{\Theta}{_{\alpha}^{\beta}}
		= \tensor{\Omega}{_{\alpha}^{\beta}}
			+ \sqrt{- 1} \lambda d \eta \cdot \tensor{\delta}{_{\alpha}^{\beta}}
			- \lambda \tensor{l}{_{\alpha}_{\overline{\gamma}}} \tensor{\theta}{^{\beta}} \wedge \tensor{\theta}{^{\overline{\gamma}}}
		= \tensor{S}{_{\alpha}^{\beta}_{\rho}_{\ovxs}} \tensor{\theta}{^{\rho}} \wedge \tensor{\theta}{^{\ovxs}}, \\
		\tensor{\Theta}{_{\infty}^{\beta}}
		= 0,
		\qquad
		\tensor{\Theta}{_{\alpha}^{\infty}}
		= 0,
		\qquad
		\tensor{\Theta}{_{\infty}^{\infty}}
		= 0.
	\end{gather}
\end{lemma}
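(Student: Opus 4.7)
The plan is a direct substitution into \cref{lem:renormalized-connection} using the quantities just established in \cref{eq:Levi-form-and-transverse-curvature-for-Sasakian-eta-Einstein,eq:GL-connection-and-torsion-for-Sasakian-eta-Einstein}, followed by a direct computation of $\Theta_{a}{}^{b} = d\theta_{a}{}^{b} - \theta_{a}{}^{c}\wedge\theta_{c}{}^{b}$, and finally an identification of $\Theta_{\alpha}{}^{\beta}$ with the Chern tensor using the Einstein condition. Two simplifications drive everything: $\wtA_{\alpha\beta} = 0$ by the Sasakian condition, and $\kappa = \lambda(1+\lambda\rho)^{-1}$ depends only on $\rho$, hence on $r$ alone, so $\kappa_{\alpha} = \wtZ_{\alpha}\kappa = 0$. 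Since $\rho$ depends only on $r$, one also has $\partial\rho = (1+\lambda\rho)\partial\log r^{2}$ and $\delb\rho = (1+\lambda\rho)\delb\log r^{2}$, while $(1-\kappa\rho)^{-1} = 1+\lambda\rho$ and $\partial\kappa = -\lambda^{2}(1+\lambda\rho)^{-2}\partial\rho$.

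Substituting these into \cref{lem:renormalized-connection} immediately yields the claimed expressions for $\theta_{\infty}{}^{\beta}$, $\theta_{\alpha}{}^{\infty}$, and (combining the $\tfrac{1}{2}\lambda\,d\log r^{2}$ from $\wtxo_{\alpha}{}^{\beta}$ with the $\tfrac{1}{2}\kappa(\partial\rho - \delb\rho)$ correction) also $\theta_{\alpha}{}^{\beta} = \omega_{\alpha}{}^{\beta} + \lambda(\partial\log r^{2})\delta_{\alpha}{}^{\beta}$. For $\theta_{\infty}{}^{\infty}$, the contributions $-\rho\kappa^{2}(1-\kappa\rho)^{-1}\partial\rho$ and $-\rho(1-\kappa\rho)^{-1}\partial\kappa$ are both proportional to $\partial\log r^{2}$ with opposite signs and equal magnitude $\lambda^{2}\rho(1+\lambda\rho)^{-1}$, hence cancel, leaving $\theta_{\infty}{}^{\infty} = -\lambda\delb\log r^{2}$.

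For the curvature, I use $d\eta = d d^{c}\log r^{2} = \sqrt{-1}\partial\delb\log r^{2}$, which gives $d(\partial\log r^{2}) = \delb\partial\log r^{2} = \sqrt{-1}\,d\eta$; combined with $d\theta^{\beta} = \theta^{\alpha}\wedge\omega_{\alpha}{}^{\beta}$ from \cref{eq:str-eq-of-TW-conn1} (since $A=0$), this gives $d\theta_{\alpha}{}^{\beta} = d\omega_{\alpha}{}^{\beta} + \sqrt{-1}\lambda\,d\eta\cdot\delta_{\alpha}{}^{\beta}$. In the product $\theta_{\alpha}{}^{\gamma}\wedge\theta_{\gamma}{}^{\beta}$ the two cross-terms $\lambda\partial\log r^{2}\wedge\omega_{\alpha}{}^{\beta}$ and $\lambda\omega_{\alpha}{}^{\beta}\wedge\partial\log r^{2}$ cancel by anti-commutativity of $1$-forms, while $\theta_{\alpha}{}^{\infty}\wedge\theta_{\infty}{}^{\beta}$ produces the factor $(1+\lambda\rho)(1+\lambda\rho)^{-1} = 1$ and yields $\lambda l_{\alpha\ovxg}\theta^{\beta}\wedge\theta^{\ovxg}$. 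Hence $\Theta_{\alpha}{}^{\beta}= \Omega_{\alpha}{}^{\beta}+\sqrt{-1}\lambda\,d\eta\cdot\delta_{\alpha}{}^{\beta}-\lambda l_{\alpha\ovxg}\theta^{\beta}\wedge\theta^{\ovxg}$, as desired. The mixed and infinity-infinity components $\Theta_{\infty}{}^{\beta}$, $\Theta_{\alpha}{}^{\infty}$, $\Theta_{\infty}{}^{\infty}$ are handled by the same kind of computation; each collapses to zero because all terms carry matched $(1+\lambda\rho)^{\pm 1}$ factors that telescope.

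Finally, to identify the above expression for $\Theta_{\alpha}{}^{\beta}$ with $\tensor{S}{_{\alpha}^{\beta}_{\rho}_{\ovxs}}\theta^{\rho}\wedge\theta^{\ovxs}$, note that $A=0$ collapses \cref{eq:curvature-form-of-TW-connection} to $\Omega_{\alpha}{}^{\beta}=\tensor{R}{_{\alpha}^{\beta}_{\rho}_{\ovxs}}\theta^{\rho}\wedge\theta^{\ovxs}$, and the Einstein condition yields $\Scal = n(n+1)\lambda$ and hence $\tensor{P}{_{\alpha}_{\ovxb}} = (\lambda/2)\tensor{l}{_{\alpha}_{\ovxb}}$. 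Expanding the definition of the Chern tensor and using $\sqrt{-1}\,d\eta = -\tensor{l}{_{\rho}_{\ovxs}}\theta^{\rho}\wedge\theta^{\ovxs}$ matches both sides. The only real obstacle is bookkeeping: tracking the powers of $(1+\lambda\rho)$ in the four entries of $\theta_{a}{}^{b}$ and confirming the three separate cancellations (in $\theta_{\infty}{}^{\infty}$, in the cross-term of $\theta_{\alpha}{}^{\gamma}\wedge\theta_{\gamma}{}^{\beta}$, and in $\Theta_{\infty}{}^{\beta}$, $\Theta_{\alpha}{}^{\infty}$, $\Theta_{\infty}{}^{\infty}$).
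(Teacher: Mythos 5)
Your proposal is correct and follows essentially the same route as the paper: substitute the Sasakian $\eta$-Einstein data ($\wtA_{\alpha\beta}=0$, $\kappa=\lambda(1+\lambda\rho)^{-1}$ with $\kappa_\alpha=0$, and \cref{eq:Levi-form-and-transverse-curvature-for-Sasakian-eta-Einstein,eq:GL-connection-and-torsion-for-Sasakian-eta-Einstein}) into \cref{lem:renormalized-connection}, then compute $\Theta_a{}^b = d\theta_a{}^b - \theta_a{}^c\wedge\theta_c{}^b$ directly using \cref{eq:str-eq-of-TW-conn1,eq:str-eq-of-TW-conn2,eq:curvature-form-of-TW-connection}, with the identification $\Theta_\alpha{}^\beta = S_\alpha{}^\beta{}_{\rho\bar\sigma}\theta^\rho\wedge\theta^{\bar\sigma}$ via $P_{\alpha\bar\beta}=(\lambda/2)l_{\alpha\bar\beta}$. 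The only difference is presentational: you leave the vanishing of $\Theta_\infty{}^\beta$, $\Theta_\alpha{}^\infty$, $\Theta_\infty{}^\infty$ as a brief sketch where the paper writes them out, but the cancellations you indicate are exactly the ones that occur.
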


\begin{proof}
	The equalities for $\tensor{\theta}{_{a}^{b}}$ are consequences of
	\cref{lem:renormalized-connection,eq:Levi-form-and-transverse-curvature-for-Sasakian-eta-Einstein,eq:GL-connection-and-torsion-for-Sasakian-eta-Einstein}.
	The curvature form $\Theta$ is defined by
	\begin{equation}
		\tensor{\Theta}{_{a}^{b}}
		= d \tensor{\theta}{_{a}^{b}} - \tensor{\theta}{_{a}^{c}} \wedge \tensor{\theta}{_{c}^{b}}.
	\end{equation}
	Hence \cref{eq:curvature-form-of-TW-connection} implies
	\begin{align}
		\tensor{\Theta}{_{\alpha}^{\beta}}
		&= d \tensor{\omega}{_{\alpha}^{\beta}} + \lambda (\delb \del \log r^{2}) \tensor{\delta}{_{\alpha}^{\beta}} \\
		& \qquad - (\tensor{\omega}{_{\alpha}^{\gamma}} + \lambda (\del \log r^{2}) \tensor{\delta}{_{\alpha}^{\gamma}})
			\wedge (\tensor{\omega}{_{\gamma}^{\beta}} + \lambda (\del \log r^{2}) \tensor{\delta}{_{\gamma}^{\beta}}) \\
		& \qquad + \lambda \tensor{l}{_{\alpha}_{\ovxg}} \tensor{\theta}{^{\ovxg}} \wedge \tensor{\theta}{^{\beta}} \\
		&= \tensor{\Omega}{_{\alpha}^{\beta}}
			+ \sqrt{- 1} \lambda d \eta \cdot \tensor{\delta}{_{\alpha}^{\beta}}
			- \lambda \tensor{l}{_{\alpha}_{\ovxg}} \tensor{\theta}{^{\beta}} \wedge \tensor{\theta}{^{\ovxg}} \\
		&= \tensor{S}{_{\alpha}^{\beta}_{\rho}_{\ovxs}} \tensor{\theta}{^{\rho}} \wedge \tensor{\theta}{^{\ovxs}}.
	\end{align}
	Similarly,
	\cref{eq:str-eq-of-TW-conn1} yields
	\begin{align}
		\tensor{\Theta}{_{\infty}^{\beta}}
		&= - \lambda^{2} (1 + \lambda \rho)^{- 1} (d \log r^{2}) \wedge \tensor{\theta}{^{\beta}} 
			+ \lambda (1 + \lambda \rho)^{- 1} d \theta^{\beta} \\
		& \quad - \lambda (1 + \lambda \rho)^{- 1} \tensor{\theta}{^{\gamma}}
			\wedge (\tensor{\omega}{_{\gamma}^{\beta}} + \lambda (\del \log r^{2}) \tensor{\delta}{_{\gamma}^{\beta}}) \\
		& \quad + \lambda^{2} (1 + \lambda \rho)^{- 1} (\delb \log r^{2}) \wedge \tensor{\theta}{^{\beta}} \\
		&= 0.
	\end{align}
	By using both \cref{eq:str-eq-of-TW-conn1,eq:str-eq-of-TW-conn2},
	we have
	\begin{align}
		\tensor{\Theta}{_{\alpha}^{\infty}}
		&= - \lambda (1 + \lambda \rho) \tensor{l}{_{\alpha}_{\ovxg}} (d \log r^{2}) \wedge \tensor{\theta}{^{\ovxg}}
			- (1 + \lambda \rho) d \tensor{l}{_{\alpha}_{\ovxg}} \wedge \tensor{\theta}{^{\ovxg}}
			- (1 + \lambda \rho) \tensor{l}{_{\alpha}_{\ovxg}} d \tensor{\theta}{^{\ovxg}} \\
		& \quad + (1 + \lambda \rho) \tensor{l}{_{\beta}_{\ovxg}}
			(\tensor{\omega}{_{\alpha}^{\beta}} + \lambda (\del \log r^{2}) \tensor{\delta}{_{\alpha}^{\beta}})
			\wedge \tensor{\theta}{^{\ovxg}} \\
		& \quad - \lambda (1 + \lambda \rho) \tensor{l}{_{\alpha}_{\ovxg}} \tensor{\theta}{^{\ovxg}} \wedge (\delb \log r^{2}) \\
		&= 0.
	\end{align}
	Finally,
	\begin{equation}
		\tensor{\Theta}{_{\infty}^{\infty}}
		= - \lambda \del \delb \log r^{2} + \lambda \tensor{l}{_{\beta}_{\ovxg}}
			\tensor{\theta}{^{\beta}} \wedge \tensor{\theta}{^{\ovxg}}
		= 0.
	\end{equation}
	These complete the proof.
\end{proof}

\begin{proof}[Proof of \cref{thm:Marugame's-CR-invariant-for-Sasakian-eta-Einstein-manifolds}]
	From the definition of $\mathscr{I}_{\Phi}$ and \cref{lem:renormalized-connection-for-Sasakian-eta-Einstein},
	it follows that
	\begin{align}
		&\scrI_{\Phi}(S) \\
		&= - \sum_{m = 0}^{n} \lp \int_{\rho < - \epsilon} \frac{(1 + \lambda \rho)^{n - m + 1}}{(- \rho)^{n - m + 2}}
			d \rho \wedge \pqty{ \frac{\eta}{2 \pi} } \wedge \pqty{ \frac{d \eta}{2 \pi} }^{n - m}
			\wedge \Phi \pqty{ \frac{\sqrt{-1}}{2 \pi} \Theta } \\
		&= - \sum_{m = 0}^{n} \pqty{ \lp \int_{\bullet}^{- \epsilon} \frac{(1 + \lambda \rho)^{n - m + 1}}{(- \rho)^{n - m + 2}} d \rho}
			\times \int_{S} \pqty{ \frac{\eta}{2 \pi} } \wedge \pqty{ \frac{d \eta}{2 \pi} }^{n - m}
			\wedge \Phi^{\prime} \pqty{ \frac{\sqrt{- 1}}{2 \pi} \Xi } \\
		&= \sum_{m = 0}^{n}
			\int_{S} \pqty{ - \frac{\lambda}{2 \pi} \eta } \wedge \pqty{ - \frac{\lambda}{2 \pi} d \eta }^{n - m}
			\wedge \Phi^{\prime} \pqty{ \frac{\sqrt{- 1}}{2 \pi} \Xi }.
	\end{align}
	This gives the desired conclusion.
\end{proof}

Let $(L, h)$ be a Hermitian line bundle over an $n$-dimensional complex manifold $Y$
such that $\omega = - \sqrt{- 1} \Theta_{h}$ defines a K\"{a}hler-Einstein metric on $Y$ with Einstein constant $(n + 1) \lambda$.
Consider the circle bundle $(S, T^{1, 0} S, \eta)$ associated with $(Y, L, h)$.
The Chern tensor $\tensor{S}{_{\alpha}^{\beta}_{\rho}_{\ovxs}}$ coincides with
the Bochner tensor $\tensor{B}{_{\alpha}^{\beta}_{\rho}_{\ovxs}}$ of $(Y, \omega)$.
Recall that $\Pi$ is the curvature form of the K\"{a}hler metric.
Define $\End(T^{1, 0} Y)$-valued two-forms $B$ and $K$ by
\begin{equation}
	\tensor{B}{_{\alpha}^{\beta}}
	= \tensor{B}{_{\alpha}^{\beta}_{\rho}_{\ovxs}} d z^{\rho} \wedge d \ovz^{\sigma},
	\qquad
	\tensor{K}{_{\alpha}^{\beta}}
	= \tensor{\Pi}{_{\alpha}^{\beta}} - \tensor{B}{_{\alpha}^{\beta}}
	= \lambda (\tensor{\delta}{_{\alpha}^{\beta}} \tensor{g}{_{\rho}_{\ovxs}}
		+ \tensor{g}{_{\alpha}_{\ovxs}} \tensor{\delta}{_{\rho}^{\beta}}) d z^{\rho} \wedge d \ovz^{\sigma}.
\end{equation}

\begin{lemma}
\label{lem:trace-of-Bochner-curvature}
	\begin{equation}
		\bqty{ \ch_{k} \pqty{ \frac{\sqrt{- 1}}{2 \pi} B } }
		= \sum_{j = 0}^{k} \frac{1}{(k - j) !} (\lambda c_{1}(L))^{k - j} \ch_{j}(T^{1, 0} Y \oplus \mathbb{C}).
	\end{equation}
\end{lemma}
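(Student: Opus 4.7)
The plan is to establish the identity at the form level by a direct computation exploiting the algebraic structure of $B = \Pi - K$, and then pass to cohomology. First, I would decompose $K = cI + N$, where $c := -\sqrt{-1}\lambda\omega$ is a scalar (hence central) $2$-form and $N_\alpha^\beta := \lambda g_{\alpha\bar\sigma}\, dz^\beta \wedge d\bar z^\sigma$. A short calculation using $\sum_\gamma g_{\gamma\bar\sigma}\, dz^\gamma \wedge d\bar z^\sigma = -\sqrt{-1}\omega$ yields $N^{2} = -cN$ and $\operatorname{tr} N = c$, while the K\"{a}hler--Einstein hypothesis gives $\operatorname{tr}\Pi = (n+1)c$. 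Setting $\Pi' := \Pi - cI$, which formally corresponds to the curvature of $T^{1,0}Y \otimes L^{\lambda}$, one obtains $B = \Pi' - N$.

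The heart of the argument is the identity
\begin{equation*}
N(\Pi')^{j} N = (-c)^{j+1}\, N \qquad (j \geq 0).
\end{equation*}
Via the central binomial expansion $(\Pi')^{j} = \sum_{i}\binom{j}{i}(-c)^{i}\Pi^{j-i}$, this reduces to $u^\gamma \wedge (\Pi^{m})_\gamma^\delta \wedge \bar u_\delta = 0$ for $m \geq 1$, where $u^\gamma := dz^\gamma$ and $\bar u_\delta := g_{\delta\bar\tau}\, d\bar z^\tau$. The latter in turn follows because the K\"{a}hler symmetry $R_{\alpha\bar\beta\rho\bar\sigma} = R_{\rho\bar\beta\alpha\bar\sigma}$ makes each of $u^\gamma \wedge \Pi_\gamma^\epsilon$ and $\Pi_\epsilon^\delta \wedge \bar u_\delta$ vanish individually (a symmetric tensor contracted against an antisymmetric wedge). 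Establishing this vanishing, together with correctly tracking signs when moving matrix-valued forms past one another, is the main technical obstacle.

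With the identity in hand, I expand $(\Pi' - N)^{k}$ non-commutatively. Cyclic invariance of the trace, combined with iterated application of the identity, shows that each of the $\binom{k}{m}$ arrangements with $m \geq 1$ copies of $-N$ and $k-m$ copies of $\Pi'$ contributes $(-1)^{m}c(-c)^{k-1}$ to $\operatorname{tr}(B^{k})$, independently of the arrangement. Summing over $m$ and using $\sum_{m=1}^{k}\binom{k}{m}(-1)^{m} = -1$ gives $\operatorname{tr}(B^{k}) = \operatorname{tr}((\Pi')^{k}) + (-c)^{k}$. Multiplying by $(\sqrt{-1}/(2\pi))^{k}/k!$ converts this into
\begin{equation*}
\ch_{k}\!\pqty{\frac{\sqrt{-1}}{2\pi}B} = \ch_{k}\!\pqty{\frac{\sqrt{-1}}{2\pi}\Pi'} + \frac{(\lambda c_{1}(L))^{k}}{k!}.
\end{equation*}
Standard Chern--Weil for the formal twist $\Pi' = \Pi - cI$ identifies the cohomology class of the first term on the right as $\sum_{j=0}^{k}\tfrac{(\lambda c_{1}(L))^{k-j}}{(k-j)!}\ch_{j}(T^{1,0}Y)$, and the residual summand is absorbed into the $j=0$ contribution using $\ch_{0}(T^{1,0}Y \oplus \mathbb{C}) = \ch_{0}(T^{1,0}Y) + 1$, yielding the stated formula.
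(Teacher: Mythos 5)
Your argument is correct, and it is in substance the paper's computation organized with a finer splitting. The paper keeps $K=\Pi-B$ intact and records the relations $\tr\Pi=\tr K=-\sqrt{-1}(n+1)\lambda\omega$, $\Pi\wedge K=K\wedge\Pi=-\sqrt{-1}\lambda\omega\wedge\Pi$, and $K\wedge K=-\sqrt{-1}\lambda\omega\wedge K$; since $\Pi$ and $K$ therefore commute, $\bigl((\sqrt{-1}/2\pi)B\bigr)^{k}$ is expanded by an ordinary binomial formula into $k+1$ terms, each a power of $-\lambda\omega/2\pi$ times a power of $\Pi$ (or times a single $K$), and the class is read off from $c_{1}(L)=-[\omega/2\pi]$ together with $\tr K$. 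Your decomposition $K=cI+N$ encodes exactly the same facts: your vanishing of $dz^{\gamma}\wedge\Pi_{\gamma}{}^{\epsilon}$ and of $\Pi_{\epsilon}{}^{\delta}\wedge \overline{u}_{\delta}$ is equivalent to $N\wedge\Pi=\Pi\wedge N=0$, which together with $N^{2}=-cN$ and centrality of $c$ reproduces the paper's relations for $K$. In particular $\Pi'\wedge N=N\wedge\Pi'=-cN$, so $\Pi'$ and $N$ commute, and the non-commutative word expansion with cyclic-trace bookkeeping is not actually needed: a one-line commutative binomial expansion already yields $\tr(B^{k})=\tr\bigl((\Pi')^{k}\bigr)+(-c)^{k}$, which is your intermediate identity. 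What your version buys is that it makes explicit where the K\"ahler symmetry enters (the Bianchi-type identity $dz^{\gamma}\wedge\Pi_{\gamma}{}^{\beta}=0$), and your reading of $\Pi'$ as the formal curvature of $T^{1,0}Y\otimes L^{\lambda}$ matches the paper's remark following the lemma; the paper's version is shorter because the commutativity is observed at the outset, and because it never needs to separate the scalar part $cI$ from $N$.
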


\begin{proof}
	The two-forms $\Pi$ and $K$ satisfy the following equations:
	\begin{gather}
		\tr \Pi
		= - \sqrt{- 1} (n + 1) \lambda \omega,
		\qquad
		\tr K
		= - \sqrt{- 1} (n + 1) \lambda \omega, \\
		\Pi \wedge K
		= K \wedge \Pi
		= - \sqrt{- 1} \lambda \omega \wedge \Pi,
		\qquad
		K \wedge K
		= - \sqrt{- 1} \lambda \omega \wedge K.
	\end{gather}
	Hence
	\begin{align}
		\pqty{ \frac{\sqrt{- 1}}{2 \pi} B }^{k}
		&= \sum_{j = 1}^{k} \binom{k}{j} \pqty{- \frac{\sqrt{- 1}}{2 \pi} K}^{k - j} \wedge \pqty{\frac{\sqrt{- 1}}{2 \pi} \Pi}^{j}
			+ \pqty{- \frac{\sqrt{- 1}}{2 \pi} K}^{k} \\
		&= \sum_{j = 1}^{k} \binom{k}{j} \pqty{- \frac{\lambda}{2 \pi} \omega}^{k - j} \wedge \pqty{\frac{\sqrt{- 1}}{2 \pi} \Pi}^{j}
			+ \pqty{- \frac{\lambda}{2 \pi} \omega}^{k - 1} \wedge \pqty{- \frac{\sqrt{- 1}}{2 \pi} K}
	\end{align}
	and
	\begin{equation}
		\ch_{k} \pqty{ \frac{\sqrt{- 1}}{2 \pi} B }
		= \sum_{j = 1}^{l} \frac{1}{(k - j) !} \pqty{- \frac{\lambda}{2 \pi} \omega}^{k - j}
			\wedge \frac{1}{j !}\tr \pqty{\frac{\sqrt{- 1}}{2 \pi} \Pi}^{j}
			+ \frac{n + 1}{k !} \pqty{- \frac{\lambda}{2 \pi} \omega}^{k}.
	\end{equation}
	Therefore we have
	\begin{align}
		\bqty{ \ch_{k} \pqty{ \frac{\sqrt{- 1}}{2 \pi} B } }
		&= \sum_{j = 1}^{k} \frac{1}{(k - j) !} (\lambda c_{1}(L))^{k - j}
			\ch_{j}(T^{1, 0} Y)
			+ \frac{n + 1}{k !} (\lambda c_{1}(L))^{k} \\
		&= \sum_{j = 0}^{k} \frac{1}{(k - j) !} (\lambda c_{1}(L))^{k - j} \ch_{j}(T^{1, 0} Y \oplus \mathbb{C}),
	\end{align}
	which completes the proof.
\end{proof}

\begin{remark}
	Formally,
	\begin{equation}
		\bqty{ \ch_{k} \pqty{ \frac{\sqrt{- 1}}{2 \pi} B } }
		= \ch_{k}((T^{1, 0} Y \oplus \mathbb{C}) \otimes L^{\lambda}).
	\end{equation}
\end{remark}

\begin{proof}[Proof of \cref{thm:Marugame's-CR-invariants-of-Chern-character-type}]
	In our setting,
	$d \eta = p^{\ast} \omega$ and $\Xi = p^{\ast} B$.
	From \cref{thm:Marugame's-CR-invariant-for-Sasakian-eta-Einstein-manifolds},
	we obtain
	\begin{align}
		\scrI_{\varsigma}(S)
		&= \int_{S} \pqty{ - \frac{\lambda}{2 \pi} \eta } \wedge \pqty{ - \frac{\lambda}{2 \pi} d \eta }^{\varsigma_{1}}
			\wedge \Phi_{\varsigma} \pqty{ \frac{\sqrt{- 1}}{2 \pi} \Xi } \\
		&= \int_{S} \pqty{ - \frac{\lambda}{2 \pi} \eta }
			\wedge p^{\ast} \bqty{ \pqty{ - \frac{\lambda}{2 \pi} \omega }^{\varsigma_{1}}
			\wedge \Phi_{\varsigma} \pqty{ \frac{\sqrt{- 1}}{2 \pi} B} } \\
		&= - \lambda \int_{Y} \pqty{ - \frac{\lambda}{2 \pi} \omega }^{\varsigma_{1}}
			\wedge \Phi_{\varsigma} \pqty{ \frac{\sqrt{- 1}}{2 \pi} B } \\
		&= - \lambda \int_{Y} (\lambda c_{1}(L))^{\varsigma_{1}}
			\prod_{k = 2}^{n} \bqty{\sum_{j = 0}^{k} \frac{1}{(k - j) !} (\lambda c_{1}(L))^{k - j}
			\ch_{j}(T^{1, 0} Y \oplus \mathbb{C})}^{\varsigma_{k}};
	\end{align}
	here,
	the third equality follows from integration along fibers.
\end{proof}

As an application,
we show the algebraic independence of $(\scrI_{\varsigma})_{\varsigma \in \Part(n)}$.
To this end,
we consider a smooth complete intersection variety $Y_{d}$ of multi-degree $d = (d_{1}, \dots , d_{n})$ in $\cps^{2 n}$.
Take as $L$ the restriction of $\mathcal{O}(- 1)$ to $Y_{d}$,
and denote by $\tau$ the first Chern class of $L^{- 1}$ for simplicity.
The normal bundle of $Y_{d}$ is isomorphic to $\bigoplus_{i = 1}^{n} \mathcal{O}(d_{i}) |_{Y_{d}}$,
and so
\begin{equation}
\label{eq:chern-character-of-complete-intersection}
	\begin{split}
		\ch_{k}(T^{1, 0} Y_{d} \oplus \mathbb{C})
		&= - \ch_{k} \pqty{ \bigoplus_{i = 1}^{n} \mathcal{O}(d_{i}) |_{Y_{d}} } + \ch_{k}(T^{1, 0} \cps^{2 n} |_{Y_{d}} \oplus \mathbb{C}) \\
		&= - \ch_{k} \pqty{ \bigoplus_{i = 1}^{n} \mathcal{O}(d_{i}) |_{Y_{d}} } + \ch_{k}(\mathcal{O}(1)^{\oplus (2 n + 1)} |_{Y_{d}}) \\
		&= - \sum_{i = 1}^{n} \ch_{k}(\mathcal{O}(d_{i}) |_{Y_{d}}) + (2 n + 1) \ch_{k}(\mathcal{O}(1) |_{Y_{d}}) \\
		&= (- s_{k}(d) + C_{k}) \tau^{k},
	\end{split}
\end{equation}
where
\begin{equation}
	s_{k}(d)
	= \frac{1}{k !} (d_{1}^{k} + \dots + d_{n}^{k}),
	\qquad
	C_{k}
	=
	\frac{2 n + 1}{k !}.
\end{equation}
In particular,
\begin{equation}
	c_{1}(T^{1, 0} Y_{d})
	= \ch_{1}(T^{1, 0} Y_{d} \oplus \mathbb{C})
	= (- s_{1}(d) + 2 n + 1) \tau.
\end{equation}
If $s_{1}(d) > 2 n + 1$,
then there exists a Hermitian metric $h$ on $L$
such that $\omega = - \sqrt{- 1} \Theta_{h}$ defines a K\"{a}hler-Einstein manifold
with Einstein constant $(n + 1) \lambda_{d} = - s_{1}(d) + 2 n + 1$
by the Aubin-Yau theorem.
Set
\begin{equation}
	\nu_{k}(d)
	= - \sum_{j = 1}^{k} \frac{s_{1}(d)^{k - j} s_{j}(d)}{(n + 1)^{k - j} (k - j) !} + \frac{s_{1}(d)^{k}}{(n + 1)^{k - 1} k !},
\end{equation}
which is a homogeneous symmetric polynomial in $d$ of degree $k$.
The cohomology class
\begin{equation}
	\bqty{ \ch_{k} \pqty{ \frac{\sqrt{- 1}}{2 \pi} B } }
	= \sum_{j = 0}^{k} \frac{1}{(k - j) !} (\lambda_{d} c_{1}(L))^{k - j} \ch_{j}(T^{1, 0} Y_{d} \oplus \mathbb{C})
\end{equation}
is equal to
\begin{equation}
	[\nu_{k}(d) + (\text{a symmetric polynomial in $d$ of degree at most $k - 1$})] \tau^{k};
\end{equation}
this follows from \cref{eq:chern-character-of-complete-intersection}.
For $\varsigma \in \Part(n)$,
define a homogeneous symmetric polynomial $p_{\varsigma}(d)$ in $d$ of degree $n$ by
\begin{equation}
	p_{\varsigma}(d)
	= \pqty{\frac{s_{1}(d)}{n + 1}}^{\varsigma_{1}} \prod_{k = 2}^{n} \nu_{k}(d)^{\varsigma_{k}}.
\end{equation}
Similar to the above,
the cohomology class
\begin{equation}
	(\lambda_{d} c_{1}(L))^{\varsigma_{1}}
	\prod_{k = 2}^{n} \bqty{\sum_{j = 0}^{k} \frac{1}{(k - j) !} (\lambda_{d} c_{1}(L))^{k - j}
	\ch_{j}(T^{1, 0} Y_{d} \oplus \mathbb{C})}^{\varsigma_{k}}
\end{equation}
is of the form
\begin{equation}
	[p_{\varsigma}(d) + (\text{a symmetric polynomial in $d$ of degree at most $n - 1$})] \tau^{n}.
\end{equation}
Denote by $S_{d}$ the circle bundle associated with $(Y_{d}, L, h)$.
From \cref{thm:Marugame's-CR-invariants-of-Chern-character-type} and $\int_{Y_{d}} \tau^{n} = d_{1} \dotsm d_{n}$,
it follows that the invariant $\scrI_{\varsigma}(S_{d})$ is a symmetric polynomial in $d$,
and its leading term is given by
\begin{equation}
	\frac{d_{1} \dotsm d_{n} s_{1}(d)}{n + 1} p_{\varsigma}(d).
\end{equation}

\begin{proof}[Proof of \cref{thm:algebraically-independence}]
	Suppose that there exists a non-constant polynomial $f(x_{\varsigma})$
	such that $f(\scrI_{\varsigma})$ is identically zero.
	Let $m (\geq 1)$ be the degree of $f$,
	and $f_{m}$ the leading part of $f$,
	which is a non-trivial homogeneous polynomial of degree $m$.
	From $f(\scrI_{\varsigma}(S_{d})) = 0$,
	it follows that
	\begin{equation}
		\pqty{\frac{d_{1} \dotsm d_{n} s_{1}(d)}{n + 1}}^{m} f_{m} (p_{\varsigma}(d))
		= 0,
	\end{equation}
	or equivalently,
	$f_{m} (p_{\varsigma}(d)) = 0$ as a polynomial in $d$.
	The polynomial $p_{\varsigma}$
	is written as a linear combination of
	$(s_{1}^{\varsigma^{\prime}_{1}} \dotsm s_{n}^{\varsigma^{\prime}_{n}})_{\varsigma^{\prime} \in \Part(n)}$.
	Conversely,
	for any $\varsigma^{\prime} \in \Part(n)$,
	the polynomial $s_{1}^{\varsigma^{\prime}_{1}} \dots s_{n}^{\varsigma^{\prime}_{n}}$
	is a linear combination of $(p_{\varsigma})_{\varsigma \in \Part(n)}$.
	Thus we obtain a non-trivial homogeneous polynomial $g_{m}(x_{\varsigma})$ of degree $m$
	such that $g_{m}(s_{1}^{\varsigma_{1}} \dotsm s_{n}^{\varsigma_{n}}) = 0$ as a polynomial in $d$.
	This contradicts the fact that
	$s_{1}, \dots , s_{n}$ are algebraically independent over $\mathbb{C}$.
	Therefore the invariants $(\scrI_{\varsigma})_{\varsigma \in \Part(n)}$ do not satisfy
	any non-trivial polynomial relation with coefficient in $\mathbb{C}$.
\end{proof}

\cref{thm:algebraically-independence} gives a criterion for the triviality of $\scrI_{\Phi}$.

\begin{proof}[Proof of \cref{cor:triviality-of-Marugame's-invariant}]
	We have already shown that $\scrI_{\Phi}$ is trivial if $\Phi \equiv 0$ modulo $\ch_{1}$ in \cref{lem:linear-combination-of-Marugame's-invariant}.
	Conversely,
	assume that $\scrI_{\Phi}$ is trivial.
	It follows from \cref{lem:linear-combination-of-Marugame's-invariant} that
	\begin{equation}
		\sum_{\varsigma \in \Part(n)} C^{\Phi}_{\varsigma} \scrI_{\varsigma}
		= 0.
	\end{equation}
	\cref{thm:algebraically-independence} implies that
	the coefficients $C^{\Phi}_{\varsigma}$ are zero,
	and so $\Phi = \ch_{1} \wtxcph$.
\end{proof}

\section{Burns-Epstein invariant for Sasakian $\eta$-Einstein manifolds}
\label{section:Burns-Epstein-invariant-for-Sasakian-eta-Einstein-manifolds}

Let $\Omega$ be an $(n + 1)$-dimensional strictly pseudoconvex domain with boundary $M$,
which admits a pseudo-Einstein contact form.
Fix a Fefferman defining function $\rho$ of $\Omega$
and consider the renormalized connection.
The \emph{Burns-Epstein invariant} $\mu(M)$ is defined by
\begin{equation}
	\mu(M)
	= \frac{1}{n !} \pqty{\frac{\sqrt{- 1}}{2 \pi}}^{n + 1} \sum_{k = 0}^{n} \binom{n}{k}
		\int_{M} (\Phi_{k}^{(0)} - \Phi_{k}^{(1)}),
\end{equation}
where $\mathfrak{S}_{n}$ denotes the symmetric group of order $n$ and
\begin{align}
	\Phi_{0}^{(0)}
	&= \sum_{\sigma, \tau \in \mathfrak{S}_{n}} \sgn(\sigma \tau)
		\tensor{\theta}{_{\infty}^{\infty}} \wedge \tensor{\theta}{_{\sigma(1)}^{\infty}} \wedge \tensor{\theta}{_{\infty}^{\tau(1)}} \wedge
		\dotsm \wedge \tensor{\theta}{_{\sigma(n)}^{\infty}} \wedge \tensor{\theta}{_{\infty}^{\tau(n)}}, \\
	\Phi_{k}^{(0)}
	&= \sum_{\sigma, \tau \in \mathfrak{S}_{n}} \sgn(\sigma \tau)
		\tensor{\theta}{_{\infty}^{\infty}} \wedge \tensor{\Theta}{_{\sigma(1)}^{\tau(1)}} \wedge \dotsm \wedge \tensor{\Theta}{_{\sigma(k)}^{\tau(k)}} \\
	& \qquad \qquad \wedge \tensor{\theta}{_{\sigma(k + 1)}^{\infty}} \wedge \tensor{\theta}{_{\infty}^{\tau(k + 1)}} \wedge 
		\dotsm \wedge \tensor{\theta}{_{\sigma(n)}^{\infty}} \wedge \tensor{\theta}{_{\infty}^{\tau(n)}}
		\quad (1 \leq k \leq n), \\
	\Phi_{0}^{(1)}
	&= \sum_{\sigma, \tau \in \mathfrak{S}_{n}} \sgn(\sigma \tau)
		\tensor{\Theta}{_{\sigma(1)}^{\infty}} \wedge \tensor{\theta}{_{\infty}^{\tau(1)}} \wedge
		\tensor{\theta}{_{\sigma(2)}^{\infty}} \wedge \tensor{\theta}{_{\infty}^{\tau(2)}} \wedge
		\dotsm \wedge \tensor{\theta}{_{\sigma(n)}^{\infty}} \wedge \tensor{\theta}{_{\infty}^{\tau(n)}}, \\
	\Phi_{k}^{(1)}
	&= \sum_{\sigma, \tau \in \mathfrak{S}_{n}} \sgn(\sigma \tau)
		 \tensor{\Theta}{_{\sigma(1)}^{\infty}} \wedge \tensor{\theta}{_{\infty}^{\tau(1)}}\wedge
		 \tensor{\Theta}{_{\sigma(2)}^{\tau(2)}} \wedge
		 \dotsm \wedge \tensor{\Theta}{_{\sigma(k + 1)}^{\tau(k + 1)}} \\
	& \qquad \qquad \wedge \tensor{\theta}{_{\sigma(k + 2)}^{\infty}} \wedge \tensor{\theta}{_{\infty}^{\tau(k + 2)}}
		\dotsm \wedge \tensor{\theta}{_{\sigma(n)}^{\infty}} \wedge \tensor{\theta}{_{\infty}^{\tau(n)}}
		\quad (1 \leq k \leq n - 1), \\
	\Phi_{n}^{(1)}
	&= 0.
\end{align}
This $\mu(M)$ is independent of the choice of a Fefferman defining function,
and gives a global CR invariant of $M$~\cite{Marugame2016}*{Theorem 4.6}.

In the previous section,
we have given formulae of the renormalized connection and curvature
for Sasakian $\eta$-Einstein manifolds.
Combining those with some algebraic facts yields a proof of \cref{thm:Burns-Epstein-invariant-for-Sasakian-eta-Einstein-manifolds}.

\begin{proof}[Proof of \cref{thm:Burns-Epstein-invariant-for-Sasakian-eta-Einstein-manifolds}]
	By \cref{lem:renormalized-connection-for-Sasakian-eta-Einstein},
	the renormalized connection and curvature on $S$ are given by
	\begin{gather}
		\tensor{\theta}{_{\alpha}^{\beta}}
		= \tensor{\omega}{_{\alpha}^{\beta}} + \sqrt{- 1} \lambda \eta \tensor{\delta}{_{\alpha}^{\beta}},
		\qquad
		\tensor{\theta}{_{\infty}^{\beta}}
		= \lambda \tensor{\theta}{^{\beta}}, \\
		\tensor{\theta}{_{\alpha}^{\infty}}
		= - \tensor{l}{_{\alpha}_{\ovxg}} \tensor{\theta}{^{\ovxg}},
		\qquad
		\tensor{\theta}{_{\infty}^{\infty}}
		= \sqrt{- 1} \lambda \eta, \\
		\tensor{\Theta}{_{\alpha}^{\beta}}
		= \tensor{\Omega}{_{\alpha}^{\beta}}
		+ \sqrt{- 1} \lambda d \eta \cdot \tensor{\delta}{_{\alpha}^{\beta}}
		- \lambda \tensor{l}{_{\alpha}_{\overline{\gamma}}} \tensor{\theta}{^{\beta}} \wedge \tensor{\theta}{^{\overline{\gamma}}}, \\
		\tensor{\Theta}{_{\infty}^{\beta}}
		= 0,
		\qquad
		\tensor{\Theta}{_{\alpha}^{\infty}}
		= 0,
		\qquad
		\tensor{\Theta}{_{\infty}^{\infty}}
		= 0.
	\end{gather}
	Since $\tensor{\Theta}{_{\alpha}^{\infty}} = 0$,
	the $(2 n + 1)$-form $\Phi_{k}^{(1)}$ vanishes identically for any $0 \leq k \leq n$.
	On the other hand,
	\begin{align}
		\frac{1}{n !} \pqty{\frac{\sqrt{- 1}}{2 \pi}}^{n + 1} \sum_{k = 0}^{n} \binom{n}{k} \Phi_{k}^{(0)}
		&= \pqty{\frac{\sqrt{- 1}}{2 \pi}}^{n + 1}
			\tensor{\theta}{_{\infty}^{\infty}} \wedge
			\det \pqty{ \tensor{\Theta}{_{\alpha}^{\beta}} + \tensor{\theta}{_{\alpha}^{\infty}} \wedge \tensor{\theta}{_{\infty}^{\beta}} } \\
		&= \pqty{ - \frac{\lambda}{2 \pi} \eta } \wedge
			\det \pqty{ - \frac{\lambda}{2 \pi} d \eta \cdot \tensor{\delta}{_{\alpha}^{\beta}}
			+ \frac{\sqrt{-1}}{2 \pi} \tensor{\Omega}{_{\alpha}^{\beta}} } \\
		&= \sum_{m = 0}^{n} \pqty{ - \frac{\lambda}{2 \pi} \eta }
			\wedge \pqty{ - \frac{\lambda}{2 \pi} d \eta }^{n - m}
			\wedge c_{m} \pqty{ \frac{\sqrt{- 1}}{2 \pi} \Omega }.
	\end{align}
	Hence
	\begin{equation}
		\mu(S)
		= \sum_{m = 0}^{n} \int_{S} \pqty{ - \frac{\lambda}{2 \pi} \eta }
			\wedge \pqty{ - \frac{\lambda}{2 \pi} d \eta }^{n - m}
			\wedge c_{m} \pqty{ \frac{\sqrt{- 1}}{2 \pi} \Omega },
	\end{equation}
	which completes the proof.
\end{proof}

Similar to the proof of \cref{thm:Marugame's-CR-invariants-of-Chern-character-type},
we obtain a formula of the Burns-Epstein invariant
for the circle bundle case in terms of the Einstein constant and characteristic numbers.

\begin{proof}[Proof of \cref{cor:Burns-Epstein-invariant-for-tubes}]
	In this setting,
	integration along fibers gives that
	\begin{align}
		\mu(S)
		&= - \lambda \sum_{m = 0}^{n}
			\int_{Y} \pqty{ - \frac{\lambda}{2 \pi} \omega }^{n - m} \wedge c_{m} \pqty{ \frac{\sqrt{- 1}}{2 \pi} \Pi } \\
		&= - \lambda \sum_{m = 0}^{n}
			\int_{Y} (\lambda c_{1}(L))^{n - m} c_{m}(T^{1, 0} Y).
	\end{align}
	This gives the desired formula.
\end{proof}

\section{Concluding remarks}
\label{section:concluding-remarks}

In this section,
we propose a conjecture on the Burns-Epstein invariant and global CR invariants via renormalized characteristic forms.

In \cref{thm:Marugame's-CR-invariant-for-Sasakian-eta-Einstein-manifolds,thm:Burns-Epstein-invariant-for-Sasakian-eta-Einstein-manifolds},
we computed global CR invariants via renormalized characteristic forms and the Burns-Epstein invariant
for Sasakian $\eta$-Einstein manifolds.
Similar to the proof in \cref{lem:trace-of-Bochner-curvature},
we have the following equality for any closed Sasakian $\eta$-Einstein manifold $(S, T^{1, 0} S, \eta)$ of dimension $2 n + 1$:
\begin{equation}
\label{eq:relation-between-BE-and-Marugame-for-Sasaki-Einstein}
	\mu(S)
	= \sum_{\varsigma \in \Part(n)} C_{\varsigma} \scrI_{\varsigma}(S),
\end{equation}
where $C_{\varsigma}$ is a real constant depending only on $\varsigma$.
In general dimensons,
it is hard to compute the coefficient $C_{\varsigma}$ explicitly.
However,
we can do it in low dimensions.
When $n \leq 3$,
we have
\begin{equation}
	\mu(S)
	=
	\begin{cases}
		- \scrI_{(1)}(S) & n = 1, \\
		\scrI_{(2, 0)}(S) - \scrI_{(0, 1)}(S) & n = 2, \\
		- \scrI_{(3, 0, 0)}(S) + \scrI_{(1, 1, 0)}(S) + 2 \scrI_{(0, 0, 1)}(S) & n = 3.
	\end{cases}
\end{equation}
When $n = 4$,
a computation shows that
\begin{equation}
	\mu(S)
	= \scrI_{(4, 0, 0, 0)}(S) - \scrI_{(2, 1, 0, 0)}(S) - 2 \scrI_{(1, 0, 1, 0)}(S)
		+ \frac{1}{2} \scrI_{(0, 2, 0, 0)}(S) - 6 \scrI_{(0, 0, 0, 1)}(S).
\end{equation}
It is natural to expect that the same equality as above holds for general closed strictly pseudoconvex CR manifolds.

\begin{conjecture}
\label{conj:relation-between-BE-and-Marugame}
	Fix a positive integer $n$.
	There exists a family $(C_{\varsigma})_{\varsigma \in \Part(n)}$ of real numbers such that
	\begin{equation}
		\mu(M)
		= \sum_{\varsigma \in \Part(n)} C_{\varsigma} \scrI_{\varsigma}(M)
	\end{equation}
	for any closed strictly pseudoconvex CR manifold $(M, T^{1, 0} M)$ of dimension $2 n + 1$
	admitting a pseudo-Einstein contact form.
\end{conjecture}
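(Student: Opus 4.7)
The plan is to prove Conjecture \ref{conj:relation-between-BE-and-Marugame} by first pinning down the candidate coefficients $C_\varsigma$ from the Sasakian $\eta$-Einstein case and then extending the resulting identity to all pseudo-Einstein CR manifolds via a transgression argument at the level of the renormalized connection. The starting point is the observation that both $\mu(M)$ and each $\scrI_\varsigma(M)$ are boundary terms (logarithmic parts) of renormalized integrals on a bounding strictly pseudoconvex domain $\Omega$ built from the renormalized connection form $\theta_a{}^b$ and curvature form $\Theta_a{}^b$. The Burns-Epstein invariant packages the top Chern form via $\int_\Omega c_{n+1}(\Theta) = \chi(\Omega) + \mu(M)$, while $\scrI_\Phi$ packages lower-degree invariant polynomials in $\Theta$.

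First, I would determine the constants $C_\varsigma$ uniquely. Applying \cref{thm:Marugame's-CR-invariants-of-Chern-character-type} and \cref{cor:Burns-Epstein-invariant-for-tubes} to circle bundles over smooth complete intersections $Y_d \subset \cps^{2n}$ of multi-degree $d$ with $s_1(d) > 2n+1$ (as in the proof of \cref{thm:algebraically-independence}), both sides of \cref{eq:relation-between-BE-and-Marugame-for-Sasaki-Einstein} become symmetric polynomials in $d$. The algebraic independence of $(\scrI_\varsigma)$ guarantees that the family of resulting linear equations admits a unique solution $(C_\varsigma)$, and this solution is forced to be real. This already realises the identity on the entire class of Sasakian $\eta$-Einstein manifolds.

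Second, to upgrade the identity from Sasakian $\eta$-Einstein manifolds to all closed pseudo-Einstein strictly pseudoconvex CR manifolds, I would seek a universal algebraic identity at the level of renormalized forms on $\Omega$. Expanding $c_{n+1}(\Theta)$ as a polynomial in $\ch_1(\Theta),\ldots,\ch_{n+1}(\Theta)$, the asymptotic vanishing $\tr\Theta = dd^{c} O(\rho^{n+2})$ in \cref{eq:trace-of-renormalized-curvature} forces every $\ch_1(\Theta)$-divisible contribution to yield a finite (hence non-logarithmic) integral, exactly as in the proof of \cref{lem:linear-combination-of-Marugame's-invariant}. Combined with the determinantal structure of the Burns-Epstein integrand (which involves the legs $\tensor{\theta}{_\infty^\beta}$, $\tensor{\theta}{_\alpha^\infty}$, $\tensor{\theta}{_\infty^\infty}$ of the renormalized connection), Stokes's theorem and the structure equation for $\theta_a{}^b$ should convert the connection-form pieces into curvature-form pieces modulo exact forms and modulo $\ch_1(\Theta)$-divisible terms, at least formally. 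The hope is to arrive at an identity of the form $c_{n+1}(\Theta) \equiv \sum_\varsigma C_\varsigma \bqty{(\omega_{+}/2\pi) \wedge \Phi'_\varsigma(\Theta)} \pmod{\ch_1(\Theta) \cdot (\ldots) + d(\ldots)}$ on a neighbourhood of $M$, whose logarithmic part recovers \cref{conj:relation-between-BE-and-Marugame}.

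The main obstacle is precisely this second step: there is no a priori reason that the connection-theoretic Burns-Epstein integrand should reduce to a curvature-only expression modulo controllable error, and a naive transgression picks up boundary terms of Chern-Simons type whose logarithmic parts are delicate to extract. A less direct but more robust route would be to invoke a CR-invariant-theoretic classification --- a CR analogue of Alexakis's theorem --- asserting that the space of global CR invariants of closed pseudo-Einstein manifolds of appropriate weight is spanned by $(\scrI_\varsigma)_{\varsigma \in \Part(n)}$ modulo invariants that vanish identically; the conjecture would then follow from its validity on Sasakian $\eta$-Einstein manifolds combined with \cref{thm:algebraically-independence}. Such a classification is not presently available, which is why the full statement remains open. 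For small $n$, however, the number of admissible Chern-tensor contractions is small enough that one can bypass invariant theory by a direct expansion of the Pfaffian-type Burns-Epstein integrand using \cref{lem:renormalized-connection-for-Sasakian-eta-Einstein} and the pseudo-Einstein identities of \cref{subsection:TW-connection-and-pE-condition}, matching term by term against the explicit $\scrI_\varsigma$-formula; this is the route I would use to establish the stated low-dimensional cases $n \leq 4$.
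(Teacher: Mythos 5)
The statement you are addressing is a conjecture, and your write-up correctly recognizes that neither your transgression sketch nor a hypothetical CR analogue of Alexakis's classification is currently available, so the general case remains open; your first step (pinning down the candidate coefficients $(C_{\varsigma})$ on circle bundles over complete intersections via \cref{thm:Marugame's-CR-invariants-of-Chern-character-type}, \cref{cor:Burns-Epstein-invariant-for-tubes}, and \cref{thm:algebraically-independence}) is sound and is essentially how the paper arrives at \cref{eq:relation-between-BE-and-Marugame-for-Sasaki-Einstein}. The genuine gap is your final claim that the low-dimensional cases $n \leq 4$ of \cref{conj:relation-between-BE-and-Marugame} can be established by ``a direct expansion of the Pfaffian-type Burns-Epstein integrand using \cref{lem:renormalized-connection-for-Sasakian-eta-Einstein}.'' That lemma computes the renormalized connection and curvature only for the special Fefferman defining function \cref{eq:Fefferman-defining-function-of-Sasakian-eta-Einstein} on the cone of a Sasakian $\eta$-Einstein manifold; it says nothing about a general closed pseudo-Einstein CR manifold, where the torsion $\tensor{A}{_{\alpha}_{\beta}}$ need not vanish and the Ricci tensor is not a constant multiple of the Levi form. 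A term-by-term match carried out with \cref{lem:renormalized-connection-for-Sasakian-eta-Einstein} therefore only reproves the identity \cref{eq:relation-between-BE-and-Marugame-for-Sasaki-Einstein} on the Sasakian $\eta$-Einstein class, i.e.\ it determines the coefficients but does not verify the conjecture for any $n$, since the conjecture quantifies over all pseudo-Einstein $(M, T^{1,0}M)$.

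By contrast, the paper proves the conjecture only for $n = 1, 2$, and by a different mechanism: it identifies $\scrI_{(n, 0, \dots, 0)}$ with a multiple of the total $Q'$-curvature $\ovQ'$, identifies $\scrI_{(0,1)}$ in dimension five with a multiple of the total $\calI'$-curvature $\overline{\calI}'$, and then invokes the universal identities $\ovQ'(M) = - 8 \pi^{2} \mu(M)$ (dimension three) and $\ovQ'(M) + 64 \pi^{3} \mu(M) = - 4 \overline{\calI}'(M)$ (dimension five), all of which hold for arbitrary pseudo-Einstein contact forms. It is precisely the availability of such universal integral identities --- not any expansion valid only in the Sasakian $\eta$-Einstein setting --- that turns the low-dimensional cases into theorems; no such identities are known for $n = 3, 4$, and accordingly even the paper does not claim the conjecture there (the displayed $n = 3, 4$ formulas are statements about Sasakian $\eta$-Einstein $S$ only). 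If you want to salvage your low-dimensional claim, you would need either these $Q'$/$\calI'$-type identities or a genuinely defining-function-level argument (your second step) carried out without the Sasakian simplifications, and the latter is exactly the step you concede you cannot complete.
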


Here we show that this conjecture holds in low dimensions.

\begin{proposition}
	\cref{conj:relation-between-BE-and-Marugame} is true for $n = 1$ and $2$.
\end{proposition}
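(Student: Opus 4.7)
The plan is to verify each identity by direct computation of local boundary integrands on $M$. Both $\mu(M)$ and $\scrI_\varsigma(M)$ are defined via the renormalized connection $\tensor{\theta}{_a^b}$, which by \cref{lem:renormalized-connection} extends smoothly to $M$ and is there determined by the Graham-Lee data of a Fefferman defining function, hence ultimately by the Tanaka-Webster curvature $\tensor{R}{_{\alpha}_{\ovxb}_{\rho}_{\ovxs}}$, the torsion $\tensor{A}{_\alpha_\beta}$, and their first covariant derivatives. So each of $\mu(M)$, $\scrI_{\varsigma}(M)$ can be written as the integral over $M$ of an explicit pseudo-Hermitian invariant $(2n+1)$-form, and the proposition reduces to comparing these local integrands modulo exact forms.

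For $n=1$, three observations simplify matters: first, the Chern tensor $\tensor{S}{_{\alpha}^{\beta}_{\rho}_{\ovxs}}$ vanishes identically in complex rank one, so $\tensor{\Xi}{_\alpha^\beta}\equiv 0$ and only the $\omega_+$-powers contribute to $\scrI_{(1)}$; second, the resulting local $3$-form is a universal linear combination of $\Scal\,\eta\wedge d\eta$, $|\tensor{A}{}|^2\,\eta\wedge d\eta$, and divergence terms; third, the local formula for $\mu(M)$ in dimension three has exactly the same shape, by the Burns-Epstein local expression in \cite{Burns-Epstein1990-Char}. Comparing the two $3$-forms yields an identity of the form $\mu(M)=c\cdot\scrI_{(1)}(M)$ for a universal constant $c$, and $c=-1$ is forced by specializing to any Sasakian $\eta$-Einstein example, for instance via \cref{thm:Burns-Epstein-invariant-for-Sasakian-eta-Einstein-manifolds,thm:Marugame's-CR-invariant-for-Sasakian-eta-Einstein-manifolds} at $n=1$.

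For $n=2$, carry out the analogous but heavier five-dimensional computation. Expand $\mu(M)$, $\scrI_{(2,0)}(M)$, $\scrI_{(0,1)}(M)$ as integrals of $5$-forms polynomial in $\tensor{R}{_{\alpha}_{\ovxb}_{\rho}_{\ovxs}}$, $\tensor{A}{_\alpha_\beta}$, and their first covariant derivatives, split the Tanaka-Webster curvature into its Chern-tensor part $\tensor{S}{_{\alpha}_{\ovxb}_{\rho}_{\ovxs}}$ and the Schouten-type correction, and impose the pseudo-Einstein relations
\begin{equation*}
\tensor{\Ric}{_\alpha_{\ovxb}}=\tfrac{1}{2}\Scal\,\tensor{l}{_\alpha_{\ovxb}},\qquad \tensor{\Scal}{_\alpha}=2\sqrt{-1}\,\tensor{A}{_\alpha_\beta_,^\beta}.
\end{equation*}
The goal is to show that $\mu(M)-\scrI_{(2,0)}(M)+\scrI_{(0,1)}(M)$ is the integral of an exact $5$-form, hence vanishes on closed $M$.

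The main obstacle is the combinatorial bookkeeping in the $n=2$ case: each of the three integrands is effectively quartic in curvature/torsion data, and matching them term by term requires careful organization by monomial type and by $U(1)$-weight in the admissible coframe. Once the pointwise identity is established modulo divergences, the coefficients $(1,-1)$ are automatically correct: they are pinned down by the Sasakian $\eta$-Einstein formulas together with the linear independence of $\scrI_{(2,0)}$ and $\scrI_{(0,1)}$ guaranteed by \cref{thm:algebraically-independence}, so no separate normalization argument is needed.
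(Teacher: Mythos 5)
Your strategy (rewrite both $\mu$ and $\scrI_{\varsigma}$ as integrals of explicit pseudo-Hermitian $(2n+1)$-forms and compare them modulo divergences) is a legitimate route in principle, but as written it contains a genuine gap: the comparison itself is never performed, and no mechanism is offered that would make it come out. For $n=2$ you state as a ``goal'' that $\mu(M)-\scrI_{(2,0)}(M)+\scrI_{(0,1)}(M)$ is the integral of an exact $5$-form, but that statement \emph{is} the proposition; asserting that it follows after ``careful organization by monomial type'' is not an argument, and nothing in your sketch explains why the three quartic integrands should cancel at all, let alone with the coefficients $(1,-1)$. The remark that the coefficients are then ``automatically correct'' by \cref{thm:algebraically-independence} is circular in the way you use it: algebraic independence pins down the coefficients only \emph{after} one knows that some universal linear relation $\mu=\sum C_{\varsigma}\scrI_{\varsigma}$ holds for all pseudo-Einstein $M$, and establishing that such a relation exists beyond the Sasakian $\eta$-Einstein class is exactly what is at stake. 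There are also smaller inaccuracies: extracting the logarithmic part produces boundary integrands involving higher jets of the Tanaka--Webster data (e.g.\ terms like $\Delta_b\Scal$, $|\nabla A|^2$ in dimension five), not just first covariant derivatives; and your claimed $n=1$ integrand ``$\Scal\,\eta\wedge d\eta$, $|A|^2\,\eta\wedge d\eta$'' mixes terms of different homogeneity --- the correct local expression is quadratic in curvature ($\Scal^{2}$, $|A|^{2}$, plus divergences), as one sees from invariance under constant rescaling of the pseudo-Einstein contact form.

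For comparison, the paper avoids the local computation entirely: it uses the identification $\scrI_{(n,0,\dots,0)}=\frac{(-1)^{n+1}}{2(2\pi)^{n+1}(n!)^{2}}\,\ovQ^{\prime}$ (Hirachi), the identification $\scrI_{(0,1)}=\frac{1}{16\pi^{3}}\overline{\calI}^{\prime}$ in dimension five (Case--Gover, Marugame), and the known global relations $\ovQ^{\prime}=-8\pi^{2}\mu$ in dimension three and $\ovQ^{\prime}+64\pi^{3}\mu=-4\overline{\calI}^{\prime}$ in dimension five (Hirachi--Marugame--Matsumoto). The content your computation would have to reproduce is precisely these nontrivial identities, so either carry out the $3$- and $5$-dimensional integrand comparisons in full detail, or invoke these results directly as the paper does.
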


\begin{proof}
	We first note that
	\begin{equation}
		\scrI_{(n, 0, \dots, 0)}(M)
		= \frac{(- 1)^{n + 1}}{2 (2 \pi)^{n + 1} (n !)^{2}} \ovQ^{\prime}(M),
	\end{equation}
	where $\ovQ^{\prime}(M)$ is the total $Q^{\prime}$-curvature;
	see~\cite{Hirachi2014}*{Theorem 5.6} for a proof.
	In dimension three,
	$\ovQ^{\prime}(M) = - 8 \pi^{2} \mu(M)$~\cite{Hirachi2014}*{Theorem 6.6},
	and so
	\begin{equation}
		\mu(M)
		= - \scrI_{(1)}(M).
	\end{equation}
	In dimension five,
	$\scrI_{(0, 1)}(M)$ is equal to
	\begin{equation}
		\scrI_{(0, 1)}(M)
		= \frac{1}{16 \pi^{3}} \overline{\calI}^{\prime}(M),
	\end{equation}
	where $\overline{\calI}^{\prime}(M)$ is the total $\calI^{\prime}$-curvature
	introduced by Case and Gover~\cite{Case-Gover2020}*{Proposition 8.8};
	this follows from~\cite{Marugame2021}*{Section 5.6 and Theorem 6.6}.
	On the other hand,
	it is known that
	\begin{equation}
		\ovQ^{\prime}(M) + 64 \pi^{3} \mu(M)
		= - 4 \overline{\calI}^{\prime}(M);
	\end{equation}
	see~\cite{Hirachi-Marugame-Matsumoto2017}*{(1.11)}.
	Thus we have
	\begin{equation}
		\mu(M)
		= \scrI_{(2, 0)}(M) - \scrI_{(0, 1)}(M),
	\end{equation}
	which completes the proof.
\end{proof}

\section*{Acknowledgements}

The author would like to thank Jeffrey S. Case and Taiji Marugame for helpful comments.

\bibliography{my-reference,my-reference-preprint}

\end{document}